\newcommand{\rrvert}{\vert}
\newcommand{\llvert}{\vert}
\newtheorem{theorem}{Theorem}[section]
\newtheorem{lem}[theorem]{Lemma}
\newtheorem{prop}[theorem]{Proposition}
\newtheorem{cor}[theorem]{Corollary}
\newcommand{\R}{\mathbb{R}}
\newcommand{\N}{\mathbb{N}}
\newcommand{\E}{\mathbb{E}}
\renewcommand{\P}{\mathbb{P}}
\begin{document}
\begin{frontmatter}

\title{Optimal stopping problems for the maximum process~with upper
and lower caps}
\runtitle{Optimal stopping for the maximum process}

\begin{aug}
\author[A]{\fnms{Curdin} \snm{Ott}\corref{}\ead[label=e1]{C.Ott@bath.ac.uk}}
\runauthor{C. Ott}
\affiliation{University of Bath}
\address[A]{Department of Mathematical Sciences\\
University of Bath\\
Claverton Down\\
Bath BA2 7AY\\
United Kingdom\\
\printead{e1}} 
\end{aug}

\received{\smonth{7} \syear{2011}}
\revised{\smonth{10} \syear{2012}}

%
\begin{abstract}
This paper concerns optimal stopping problems driven by the running
maximum of a spectrally negative L\'evy process $X$. More precisely, we
are interested in modifications of the Shepp--Shiryaev optimal stopping
problem [Avram, Kyprianou and Pistorius \textit{Ann. Appl. Probab.}
\textbf{14} (2004) \mbox{215--238}; Shepp and Shiryaev \textit{Ann. Appl.
Probab.} \textbf{3} (1993) 631--640; Shepp and Shiryaev \textit{Theory
Probab. Appl.} \textbf{39} (1993) 103--119]. First, we consider a
capped version of the Shepp--Shiryaev optimal stopping problem and
provide the solution explicitly in terms of scale functions. In
particular, the optimal stopping boundary is characterised by an
ordinary differential equation involving scale functions and changes
according to the path variation of $X$. Secondly, in the spirit of
[Shepp, Shiryaev and Sulem \textit{Advances in Finance and Stochastics}
(2002) 271--284 Springer], we consider a modification of the capped
version of the Shepp--Shiryaev optimal stopping problem in the sense
that the decision to stop has to be made before the process $X$ falls
below a given level.
\end{abstract}

%
\begin{keyword}[class=AMS]
\kwd[Primary ]{60G40}
\kwd[; secondary ]{60G51}
\kwd{60J75}
\end{keyword}
\begin{keyword}
\kwd{Optimal stopping}
\kwd{optimal stopping boundary}
\kwd{principle of smooth fit}
\kwd{principle of continuous fit}
\kwd{L\'evy processes}
\kwd{scale functions}
\end{keyword}

\pdfkeywords{60G40, 60G51, 60J75, Optimal stopping,
optimal stopping boundary, principle of smooth fit,
principle of continuous fit, Levy processes,
scale functions}

\end{frontmatter}

\section{Introduction}
Let $X=\{X_t\dvtx t\geq0\}$ be a spectrally negative L\'evy process defined
on a filtered probability space $(\Omega,\mathcal{F},\mathbb{F}=\{
\mathcal{F}_t\}_{t\geq0},\P)$ satisfying the natural conditions;
cf. \cite{bichteler}, Section 1.3, page 39. For $x\in\R$, denote by
$\P_x$ the probability measure under which $X$ starts at $x$ and for
simplicity write $\P_0=\P$. We associate with $X$ the maximum process
$\overline X=\{\overline X_t\dvtx t\geq0\}$ given by $\overline X_t:=s\vee
\sup_{0\leq u\leq t} X_u$ for $t\geq0,s\geq x$. The law under which
$(X,\overline X)$ starts at $(x,s)$ is denoted by $\P_{x,s}$.

In this paper we are mainly interested in the following optimal
stopping problem:
%
\begin{equation}\label{problem1}
V^*_\epsilon(x,s)=\sup_{\tau\in\mathcal{M}}\E_{x,s}
\bigl[e^{-q\tau+\overline X_\tau\wedge\epsilon} \bigr],
\end{equation}
where $\epsilon\in\R,q>0,(x,s)\in \mathbb R^2 := \{(x_1, s_1) \in E |
x_1 \leq s_1\}$, and $\mathcal{M}$ is the set of all finite $\mathbb
{F}$-stopping times. Since the constant $\epsilon$ bounds the process
$\overline X$ from above, we refer to it as the upper cap. Due to the
fact that the pair $(X,\overline X)$ is a strong Markov process, (\ref
{problem1}) has also a Markovian structure and hence the general theory
of optimal stopping \cite{peskir} suggests that the optimal stopping
time is the first entry time of the process $(X,\overline X)$ into some
subset of $E$. Indeed, it turns out that under some assumptions on $q$
and $\psi(1)$, where $\psi$ is the Laplace exponent of $X$ [see (\ref
{Lexponent}), page \pageref{Lexponent}, for a formal definition], the
solution of (\ref{problem1}) is given by
\[
\tau_\epsilon^*:=\inf\bigl\{t\geq0\dvtx \overline X_t-X_t
\geq g_\epsilon(\overline X_t)\bigr\}
\]
for some function $g_\epsilon$ which is characterised as a solution to
a certain ordinary differential equation involving scale functions. The
function $s\mapsto s-g_\epsilon(s)$ is sometimes referred to as the
optimal stopping boundary. We will show that the shape of the optimal
boundary has different characteristics according to the path variation
of $X$. The solution of problem (\ref{problem1}) is closely related to
the solution of the Shepp--Shiryaev optimal stopping problem
%
\begin{equation}\label{problem2}
V^*(x,s)=\sup_{\tau\in\mathcal{M}}\E_{x,s} \bigl[e^{-q\tau
+\overline X_\tau}
\bigr],
\end{equation}
which was first studied by Shepp and
Shiryaev \cite{russianoption,anewlook} for the case when $X$ is a
linear Brownian motion and later by Avram, Kyprianou and
Pistorius \cite{exitproblems} for the case when $X$ is a spectrally
negative L\'evy process. Shepp and Shiryaev \cite{russianoption}
introduced the problem as a means to pricing Russian options. In the
latter context the solution of (\ref{problem2}) can be viewed as the
fair price of such an option. If we introduce a cap $\epsilon$, an
analogous interpretation of the solution of (\ref{problem1}) applies,
but for a Russian option whose payoff was moderated by capping it at a
certain level (a fuller description is given in
Section \ref{application}).

Our method for
solving (\ref{problem1}) consists of a verification technique, that is,
we heuristically derive a candidate solution and then verify that it is
indeed a solution. In particular, we will make use of the principle of
smooth and continuous fit \cite{peskir,mikhalevich,pesshir,someremarks}
in a similar way
to \cite{russianoption,maximalityprinciple}.

It is also natural to ask for a modification of (\ref{problem1}) with
a lower cap. Whilst this is already included in the starting point of
the maximum process $\overline X$, there is a stopping problem that
captures this idea of lower cap in the sense that the decision to
exercise has to be made before $X$ drops below a certain level.
Specifically, consider
%
\begin{equation}\label{problem3}
V^*_{\epsilon_1,\epsilon_2}(x,s)=\sup_{\tau\in\mathcal
{M}_{\epsilon_1}}\E_{x,s}
\bigl[e^{-q\tau+\overline X_\tau\wedge
\epsilon_2} \bigr],
\end{equation}
where $\epsilon_1,\epsilon_2\in\R$ such that $\epsilon_1<\epsilon
_2$, $q>0,\mathcal{M}_{\epsilon_1}:=\{\tau\in\mathcal{M} \vert
\tau\leq T_{\epsilon_1}\}$ and $T_{\epsilon_1}:=\break\inf\{t\geq
0\dvtx X_t\leq\epsilon_1\}$.
In the special case of no cap ($\epsilon_2=\infty$), this problem was
considered by Shepp, Shiryaev and Sulem \cite{abarrierversion} for
the case where $X$ is a linear Brownian motion. Inspired by their
result we expect the optimal stopping time to be of the form
$T_{\epsilon_1}\wedge\tau^*_{\epsilon_2}$, where $\tau^*_{\epsilon
_2}$ is the optimal stopping time in (\ref{problem1}). Our main
contribution here is that, with the help of excursion theory (cf. \cite
{kyprianou,bertoinbook}), we find a closed form expression for the
value function associated with the strategy $T_{\epsilon_1}\wedge\tau
^*_{\epsilon_2}$, thereby allowing us to verify that it is indeed an
optimal strategy.

This paper is organised as follows. In Section \ref{application} we
provide some motivation for studying (\ref{problem1}) and (\ref
{problem3}). Then we introduce some more notation and collect some
auxiliary results in Section \ref{not}. Our main results are presented
in Section \ref{mainre}, followed by their proofs in Sections \ref
{firsttheorem} and \ref{mainrepf}. Finally, some numerical examples
are given in Section \ref{applications}.

\section{Application to pricing capped Russian options}\label{application}
The aim of this section is to give some motivation for studying (\ref
{problem1}) and (\ref{problem3}).

Consider a financial market consisting of a riskless bond and a
risky asset. The value of the bond $B=\{B_t\dvtx t\geq0\}$ evolves
deterministically such that
%
\begin{equation}\label{discount}
B_t=B_0e^{rt},\qquad B_0>0,r\geq0,t
\geq0.
\end{equation}
The price of the risky asset is modeled as the exponential spectrally
negative L\'evy process
%
\begin{equation}\label{model}
S_t=S_0e^{X_t},\qquad S_0>0,t\geq0.
\end{equation}
In order to guarantee that our model is free of arbitrage we will
assume that $\psi(1)=r$. If $X_t=\mu t+\sigma W_t$, where $W=\{
W_t\dvtx t\geq0\}$ is a standard Brownian motion, we get the standard
Black--Scholes model for the price of the asset. Extensive empirical
research has shown that this (Gaussian) model is not capable of
capturing certain features (such as skewness and heavy tails) which are
commonly encountered in financial data, for example, returns on stocks.
To accommodate for these problems, an idea, going back to \cite
{merton}, is to replace the Brownian motion as the model for the
log-price by a general L\'evy process $X$; cf. \cite{chan}. Here we
will restrict ourselves to the model where $X$ is given by a spectrally
negative L\'evy process. This restriction is mainly motivated by
analytical tractability. It is worth mentioning, however, that Carr and
Wu \cite{carr} as well as Madan and Schoutens \cite{madan} have
offered empirical evidence to support the case of a model in which the
risky asset is driven by a spectrally negative L\'evy process for
appropriate market scenarios.

A capped Russian option is an option which gives the holder the
right to exercise at any almost surely finite stopping time $\tau$
yielding payouts
\[
e^{-\alpha\tau} \Bigl(M_0\vee\sup_{0\leq u\leq\tau}S_u
\wedge C \Bigr),\qquad C> M_0\geq S_0,\alpha>0.
\]
The constant $M_0$ can be viewed as representing the ``starting''
maximum of the stock price (say, over some previous period $(-t_0,0])$.
The constant $C$ can be interpreted as cap and moderates the payoff of
the option. The value $C=\infty$ is also allowed and corresponds to no
moderation at all. In this case we just get the normal Russian option.
Finally, when $C=\infty$ it is necessary to choose $\alpha$ strictly
positive to guarantee that it is optimal to stop in finite time and
that the value is finite; cf. Proposition \ref{solution2}.

Standard theory of pricing American-type options \cite
{shirfin} directs one to solving the optimal stopping problem
%
\begin{equation}\label{motivation}
V_r(M_0,S_0,C):=B_0\sup
_{\tau}\E\Bigl[B^{-1}_\tau
e^{-\alpha\tau
} \Bigl(M_0\vee\sup_{0\leq u\leq\tau}S_u
\wedge C \Bigr) \Bigr],
\end{equation}
where the supremum is taken over all $[0,\infty)$-valued $\mathbb
{F}$-stopping times. In other words, we want to find a stopping time
which optimises the expected discounted claim. The right-hand side
of (\ref{motivation}) may be rewritten as
\[
V_r(M_0,S_0,C)=V^*_\epsilon(x,s)=
\sup_{\tau\in\mathcal{M}}\E_{x,s} \bigl[e^{-q\tau+\overline X_\tau
\wedge\epsilon}\bigr],
\]
where $q=r+\alpha,x=\log(S_0),s=\log(M_0)$ and $\epsilon=\log( C
)$.

In (\ref{motivation}) one might only allow stopping times that
are smaller or equal than the first time the risky asset $S$ drops
below a certain barrier. From a financial point of view this
corresponds to a default time after which all economic activity stops;
cf.~\cite{abarrierversion}. Including this additional feature leads
in an analogous way to the above optimal stopping problem (\ref{problem3}).


\section{Notation and auxiliary results}\label{not}
The purpose of this section is to introduce some notation and collect
some known results about spectrally negative L\'evy processes.
Moreover, we state the solution of the Shepp--Shiryaev optimal stopping
problem (\ref{problem2}) which will play an important role throughout
this paper.

\subsection{Spectrally negative L\'evy processes}
It is well known that a spectrally negative L\'evy process $X$ is
characterised by its L\'evy triplet $(\gamma,\sigma,\Pi)$, where
\mbox{$\sigma\geq0, \gamma\in\R$} and $\Pi$ is a measure on $(-\infty,0)$
satisfying the condition $\int_{(-\infty,0)}(1\wedge x^2) \Pi
(dx)<\infty$. By the L\'evy--It\^o decomposition, $X$ may be
represented in the form
%
\begin{equation}\label{LevyItodecomposition1}
X_t=\sigma B_t-\gamma t+X^{(1)}_t+X^{(2)}_t,
\end{equation}
where $\{B_t\dvtx t\geq0\}$ is a standard Brownian motion, $\{
X^{(1)}_t\dvtx t\geq0\}$ is a compound Poisson process with discontinuities
of magnitude bigger than or equal to one and $\{X_t^{(2)}\dvtx t\geq0\}$ is
a square integrable martingale with discontinuities of magnitude
strictly smaller than one and the three processes are mutually
independent. In particular, if $X$ is of bounded variation, the
decomposition reduces to
%
\begin{equation}\label{LevyItodecomposition2}
X_t=\mathtt{d}t-\eta_t,
\end{equation}
where $\mathtt{d}>0$, and $\{\eta_t\dvtx t\geq0\}$ is a driftless
subordinator. Furthermore, the spectral negativity of $X$ ensures
existence of the Laplace exponent $\psi$ of $X$, that is, $\E
[e^{\theta X_1}]=e^{\psi(\theta)}$ for $\theta\geq0$, which is
known to take the form
\renewcommand{\theequation}{\mbox{$\ast$}}
\begin{equation}
\label{Lexponent}
\psi(\theta)=-\gamma\theta+\frac{1}{2}
\sigma^2\theta^2+\int_{(-\infty,0)}
\bigl(e^{\theta x}-1-\theta x1_{\{x>-1\}} \bigr) \Pi(dx).
\end{equation}
Its right-inverse is defined by
\[
\Phi(q):=\sup\bigl\{\lambda\geq0\dvtx \psi(\lambda)=q\bigr\}
\]
for $q\geq0$.

For any spectrally negative L\'evy process having $X_0=0$ we
introduce the family of martingales
\[
\exp\bigl(cX_t-\psi(c)t\bigr),
\]
defined for any $c\in\R$ for which $\psi(c)=\log\E[\exp
(cX_1)]<\infty$, and further the corresponding family of measures $\{
\P^c\}$ with Radon--Nikodym derivatives
%
\setcounter{equation}{8}
\renewcommand{\theequation}{\arabic{equation}}
\begin{equation}\label{changeofmeasure}
\frac{d\P^c}{d\P} \bigg\vert_{\mathcal{F}_t}=\exp\bigl(cX_t-\psi(c)t
\bigr).
\end{equation}
For all such $c$ the measure $\P^c_x$ will denote the translation of
$\P^c$ under which $X_0=x$. In particular, under $\P_x^c$ the process
$X$ is still a spectrally negative L\'evy process; cf. Theorem 3.9
in \cite{kyprianou}.

\subsection{Scale functions}
A special family of functions associated with spectrally negative
L\'evy processes is that of scale functions (cf. \cite{kyprianou})
which are defined as follows. For $q\geq0$, the $q$-scale function
$W^{(q)}\dvtx \R\longrightarrow[0,\infty)$ is the unique function whose
restriction to $(0,\infty)$ is continuous and has Laplace transform
\[
\int_0^\infty e^{-\theta x}W^{(q)}(x)
\,dx=\frac{1}{\psi(\theta
)-q},\qquad \theta>\Phi(q),
\]
and is defined to be identically zero for $x\leq0$. Equally important
is the scale function $Z^{(q)}\dvtx \R\longrightarrow[1,\infty)$ defined by
\[
Z^{(q)}(x)=1+q\int_0^xW^{(q)}(z)
\,dz.
\]
The passage times of $X$ below and above $k\in\R$ are denoted by
\[
\tau_k^-=\inf\{t>0\dvtx X_t\leq k\} \quad\mbox{and}\quad
\tau_k^+=\inf\{ t>0\dvtx X_t\geq k\}.
\]
We will make use of the following four identities. For $q\geq0$ and
$x\in(a,b)$ it holds that
%
\begin{eqnarray}
\label{scale1}
\E_x \bigl[e^{-q\tau^+_b}I_{\{\tau^+_b<\tau^-_a\}} \bigr]&=&
\frac
{W^{(q)}(x-a)}{W^{(q)}(b-a)},
\\
\label{scale2}
\E_x \bigl[e^{-q\tau^-_a}I_{\{\tau^+_b>\tau^-_a\}} \bigr]&=&
Z^{(q)}(x-a)-W^{(q)}(x-a)
\frac{Z^{(q)}(b-a)}{W^{(q)}(b-a)}
\end{eqnarray}
for $q>0$ and $x\in\R$ it holds that
%
\begin{equation}\label{scale3}
\E_x \bigl[e^{-q\tau_0^-}1_{\{\tau_0^-<\infty\}} \bigr]=Z^{(q)}(x)-
\frac{q}{\Phi(q)}W^{(q)}(x);
\end{equation}
and finally for $q>0$ we have
%
\begin{equation}\label{scale4}
\lim_{x\to\infty}\frac{Z^{(q)}(x)}{W^{(q)}(x)}=\frac{q}{\Phi
(q)}.
\end{equation}
Identities (\ref{scale1}) and (\ref{scale2}) are Proposition 1
in \cite{exitproblems}, identity (\ref{scale4}) is Lemma 1 of \cite
{exitproblems} and (\ref{scale3}) can be found in Theorem 8.1 in \cite
{kyprianou}.
For each $c\geq0$ we denote by $W_c^{(q)}$ the $q$-scale function with
respect to the measure $\P^c$. A useful formula (cf. \cite
{kyprianou}) linking the scale function under different measures is
given by
%
\begin{equation}\label{scale5}
W^{(q)}(x)=e^{\Phi(q)x}W_{\Phi(q)}(x)
\end{equation}
for $q\geq0$ and $x\geq0$.

We conclude this subsection by stating some known regularity properties
of scale functions; cf. Lemma 2.4, Corollary 2.5, Theorem 3.10, Lemmas
3.1 and 3.2 of \cite{KuzKypRiv}.

\textit{Smoothness}: For all $q\geq0$,
\[
W^{(q)}\vert_{(0,\infty)}\in\cases{
C^1(0,\infty),& if $X$ is of bounded variation and $\Pi$ has no
atoms,
\cr
C^1(0,\infty),& if $X$ is of unbounded variation and $
\sigma=0$,
\cr
C^2(0,\infty),& $\sigma>0$.}
\]

\textit{Continuity at the origin}: For all $q\geq0$,
%
\begin{equation}
\label{continuityatorigin} W^{(q)}(0+)=\cases{\mathtt{d}^{-1},&
if $X$ is of bounded variation,
\cr
0, & if $X$ is of unbounded variation.}
\end{equation}

\textit{Derivative at the origin}: For all $q\geq0$,
%
\begin{equation}
\label{derivativeatorigin} W_+^{(q)\prime}(0+)=\cases{ \displaystyle \frac{q+\Pi
(-\infty,0)}{\mathtt{d}^2}, & if
$\sigma=0$ and $\Pi(-\infty,0)<\infty$,
\cr
\displaystyle \frac{2}{\sigma^2}, & if $\sigma>0$ or
$\Pi(-\infty,0)=\infty$,}
\end{equation}
where we understand the second case to be $+\infty$ when $\sigma=0$.

For technical reasons, we require for the rest of the paper
that $W^{(q)}$ is in $C^1(0,\infty)$ [and hence $Z^{(q)}\in
C^2(0,\infty)$]. This is ensured by henceforth assuming that $\Pi$ is
atomless whenever $X$ has paths of bounded variation.

\subsection{Solution to the Shepp--Shiryaev optimal stopping
problem}\label{ss}
In order to state the solution of the Shepp--Shiryaev optimal stopping
problem, we introduce the function $f\dvtx [0,\infty)\rightarrow\R$ which
is defined as
\[
f(z)=Z^{(q)}(z)-qW^{(q)}(z).
\]
It can be shown (cf. page 6 of \cite{baurdoux}) that, when $q>\psi
(1)$, the function $f$ is strictly decreasing to $-\infty$ and hence
within this regime
\[
k^*:=\inf\bigl\{z\geq0\dvtx Z^{(q)}(z)\leq qW^{(q)}(z)\bigr\}\in[0,
\infty).
\]
In particular, when $q>\psi(1)$, then $k^*=0$ if and only if
$W^{(q)}(0+)\geq q^{-1}$. Also, note that the requirement
$W^{(q)}(0+)\geq
q^{-1}$ implies $q\geq\mathtt{d}>\psi(1)$. We now give a
reformulation of a part of Theorem 1 in \cite{baurdoux}.
%
\begin{prop}\label{solution2}
\textup{(a)}
Suppose that $q>\psi(1)$ and $W^{(q)}(0+)<q^{-1}$. Then the
solution of (\ref{problem2}) is given by
\[
V^*(x,s)=e^sZ^{(q)}\bigl(x-s+k^*\bigr)
\]
with optimal strategy
\[
\tau^*:=\inf\bigl\{t\geq0\dvtx\overline X_t-X_t\geq k^*
\bigr\}.
\]

\textup{(b)} If $W^{(q)}(0+)\geq q^{-1}$ [and hence $q>\psi
(1)$], then the solution of (\ref{problem2}) is given by
$V^*(x,s)=e^s$ and optimal strategy $\tau^*=0$.

\textup{(c)} If $q\leq\psi(1)$, then $V^*(x,s)=\infty$.
\end{prop}
The result in part (b) of Proposition \ref
{solution2} is not surprising. If $W^{(q)}(0+)\geq q^{-1}$, then
$X$ is necessarily of bounded variation with $\mathtt{d}\leq q$ which
implies that the process $(e^{-qt+\overline X_t})_{t\geq0}$ is
pathwise decreasing. As a result we have for $\tau\in\mathcal
{M}$ the inequality $\E_{x,s} [e^{-q\tau+\overline X_\tau}
]\leq e^s$ and hence (b) follows. An analogous argument
shows that $V^*_\epsilon(x,s)=e^{s\wedge\epsilon}$ for $(x,s)\in E$
with optimal strategy $\tau^*_\epsilon=0$ and $V^*_{\epsilon
_1,\epsilon_2}(x,s)=e^{s\wedge\epsilon_2}$ for $(x,s)\in E$ with
optimal strategy $\tau^*_{\epsilon_1,\epsilon_2}=0$. Therefore, we
will not consider the regime $W^{(q)}(0+)\geq q^{-1}$ in what follows.
Note, however, that the parameter regime $q\leq\psi(1)$ will not be
degenerate for (\ref{problem1}) and (\ref{problem3}) due to the upper
cap which prevents the value function from exploding.

\section{Main results}\label{mainre}
\subsection{Maximum process with upper cap}
The first result ensures existence of a function $g_\epsilon$ which,
as will follow in due course, describes the optimal stopping boundary
in (\ref{problem1}).
%
\begin{lem}\label{g}
Let $\epsilon\in\R$ be given.
\begin{longlist}[(b)]
\item[(a)]
If $q>\psi(1)$ and $W^{(q)}(0+)<q^{-1}$, then
$k^*\in(0,\infty)$.
\item[(b)] If $q\leq\psi(1)$, then $k^*=\infty$.
\item[(c)] Under the assumptions in \textup{(a)} or \textup{(b)}, there
exists a unique solution $g_\epsilon\dvtx (-\infty,\epsilon)\to
(0,k^*)$ of the ordinary differential equation
%
\begin{equation}\label{diffequ}
g_\epsilon'(s)=1-\frac{Z^{(q)}(g_\epsilon(s))}{qW^{(q)}(g_\epsilon
(s))} \qquad\mbox{on $(-\infty,
\epsilon)$}
\end{equation}
satisfying $\lim_{s\uparrow\epsilon}g_\epsilon(s)=0$ and
$\lim_{s\to-\infty}g_\epsilon(s)=k^*$.
\end{longlist}
\end{lem}
Next, extend $g_\epsilon$ to the whole real line by setting
$g_\epsilon(s)=0$ for $s\geq\epsilon$. We now present the solution
of (\ref{problem1}).
%
\begin{theorem}\label{rwcmainresult}
Let $\epsilon\in\R$ be given and suppose that $q>\psi(1)$
and\break
$W^{(q)}(0+)<q^{-1}$ or $q\leq\psi(1)$. Then the solution of (\ref
{problem1}) is given by
\[
V^*_\epsilon(x,s)=e^{s\wedge\epsilon}Z^{(q)}\bigl(x-s+g_\epsilon(s)
\bigr)
\]
with corresponding optimal strategy
\[
\tau_\epsilon^*:=\inf\bigl\{t\geq0\dvtx\overline X_t-X_t
\geq g_\epsilon(\overline X_t)\bigr\},
\]
where $g_\epsilon$ is given in Lemma \ref{g}.
\end{theorem}

Define the continuation region
\[
C_\epsilon^*=C^*:=\bigl\{(x,s)\in E \vert s<\epsilon,s-g_\epsilon
(s)<x\leq s\bigr\}
\]
and the stopping region $D^*_\epsilon=D^*:=E\setminus C^*$. The shape
of the boundary separating them, that is, the optimal stopping
boundary, is of particular interest. Theorem \ref{rwcmainresult}
together with (\ref{continuityatorigin}) and (\ref{diffequ}) shows that
\[
\lim_{s\uparrow\epsilon}g_\epsilon'(s)=\cases{-\infty,
&\quad if $X$ is of unbounded variation,
\cr
1-\mathtt{d}/q, &\quad if $X$ is of bounded
variation.}
\]
Also, using (\ref{scale4}) we see that
\[
\lim_{s\to-\infty}g_\epsilon^\prime(s)=\cases{0, &\quad if
$q>\psi(1)$ and $W^{(q)}(0+)<q^{-1}$,
\cr
1-
\Phi(q)^{-1}, &\quad if $q\leq\psi(1)$.}
\]
This (qualitative) behaviour of $g_\epsilon$ and the resulting shape of
the continuation and stopping region are illustrated in Figure \ref
{figbehaviour}. Note in particular that the shape of $g_\epsilon$
at~$\epsilon$ (and consequently the optimal boundary) changes according
to the path variation of $X$.
%
\begin{figure}

\includegraphics{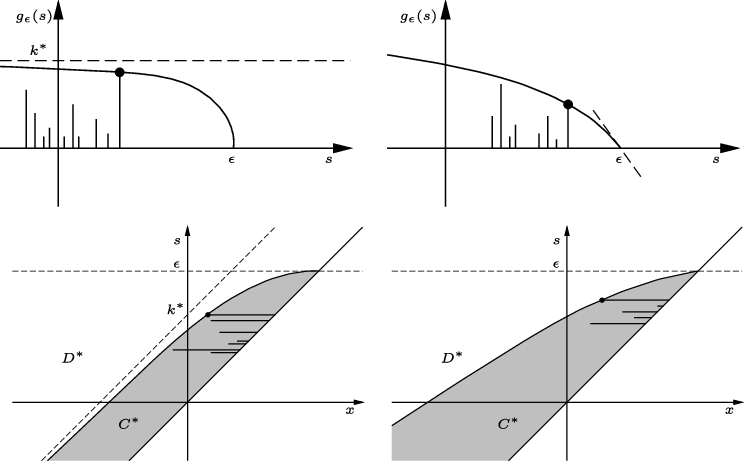}

\caption{For the two pictures on the left it is assumed that $q>\psi
(1)$ and $W^{(q)}(0+)=0$ whereas on the right it is assumed that $q\leq
\psi(1)$.}\label{figbehaviour}
\end{figure}
The horizontal and vertical lines in Figure \ref{figbehaviour} are
meant to schematically indicate the trace of the excursions of $X$ away
from the running maximum. We thus see that the optimal strategy
consists of continuing if the height of the excursion away from the
running supremum $s$ does not exceed $g_\epsilon(s)$; otherwise we stop.

\subsection{Maximum process with upper and lower cap}\label{Mpwualc}
Inspired by the result in~\cite{abarrierversion}, we expect the
strategy $T_{\epsilon_1}\wedge\tau_{\epsilon_2}^*$ to be optimal, where
$\tau_{\epsilon_2}^*$ is given in Theorem \ref{rwcmainresult} and
$T_{\epsilon_1}=\inf\{t\geq0\dvtx X_t\leq\epsilon_1\}$. This means that the
optimal boundary is expected to be a vertical line at $\epsilon_1$
combined with the curve described by $g_{\epsilon_2}$ characterised in
Lemma \ref{g}. Before we can proceed, we need to introduce an auxiliary
quantity, namely the point on the $s$-axis where the vertical line at
$\epsilon_1$ and the optimal boundary corresponding to $g_{\epsilon_2}$
intersect; see Figure \ref{figcombined}. If $q>\psi(1)$ and
$W^{(q)}(0+)<q^{-1}$ or $q\leq\psi(1)$ define the map
$a_{\epsilon_2}\dvtx (-\infty,\epsilon_2)\to(0,k^*)$ by
%
\begin{figure}

\includegraphics{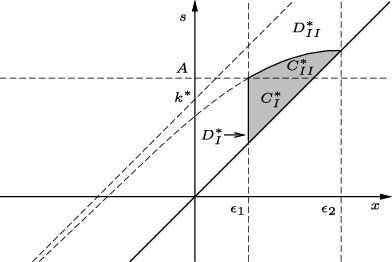}

\caption{A qualitative picture of the continuation and stopping region
under the assumption that $q>\psi(1)$ and $W^{(q)}(0+)=0$; cf.
Theorem \protect\ref{combinationmainresult}.}\label{figcombined}
\end{figure}
$a_{\epsilon_2}(s):=s-g_{\epsilon_2}(s)$. It follows by definition of
$g_{\epsilon_2}$ that $a_{\epsilon_2}$ is continuous, strictly
increasing and satisfies $\lim_{s\uparrow\epsilon_2}a_{\epsilon
_2}(s)=\epsilon_2$ and $\lim_{s\downarrow-\infty}a_{\epsilon
_2}(s)=-\infty$. Therefore the intermediate value theorem
guarantees existence of a unique
$A_{\epsilon_1,\epsilon_2}=A\in(-\infty,\epsilon_2)$ such that
$A-g_{\epsilon_2}(A)=\epsilon_1$. Our candidate optimal strategy
$T_{\epsilon_1}\wedge\tau_{\epsilon_2}^*$ splits $E_{\epsilon_1} :=
\{(x, s) \in E\dvtx x \geq \epsilon_1\}$ into the stopping regions
\begin{eqnarray*}
D_{I,\epsilon_1,\epsilon_2}^*&=&D_I^*:=\bigl\{(x,s)\in E\dvtx x=\epsilon
_1,\epsilon_1\leq s\leq A\bigr\},
\\
D_{\mathit{II},\epsilon_1,\epsilon_2}^*&=&D_{\mathit{II}}^*:=\bigl\{(x,s)\in E\dvtx
\epsilon
_1\leq x\leq s-g_{\epsilon_2}(s),s>A\bigr\}
\end{eqnarray*}
and the continuation regions
\begin{eqnarray*}
C_{I,\epsilon_1,\epsilon_2}^*&=&C_I^*:=\bigl\{(x,s)\in E\dvtx\epsilon
_1<x\leq s,\epsilon_1<s<A\bigr\},
\\
C_{\mathit{II},\epsilon_1,\epsilon_2}^*&=&C_{\mathit{II}}^*:=\bigl\{(x,s)\in E\dvtx
s-g_{\epsilon_2}(s)<x\leq s,A\leq s<\epsilon_2\bigr\}.
\end{eqnarray*}
%
Clearly, if $(x,s)\in E\setminus E_{\epsilon_1}$, then the only
stopping time in $\mathcal{M}_{\epsilon_1}$ is $\tau=0$ and hence the
optimal value function is given by $e^{s\wedge\epsilon_2}$.
Furthermore, when \mbox{$(x,s)\in C^*_{\mathit{II}}\cup
D^*_{\mathit{II}}$} we have $\tau _{\epsilon_2}^*\leq T_{\epsilon_1}$,
so that the optimality\vspace*{1pt} of $\tau _{\epsilon_2}^*$ in (\ref{problem1})
implies \mbox{$V^*_{\epsilon
_1,\epsilon_2}(x,s)=V_{\epsilon_2}^*(x,s)$}. Consequently,\vspace*{1pt} the
interesting case is really \mbox{$(x,s)\in C^*_I\cup D^*_I$}. The
key to verifying that $T_{\epsilon_1}\wedge\tau_{\epsilon_2}^*$ is
optimal, is to find the value function associated with it.
%
\begin{lem}\label{valuefunction1}
Let $\epsilon_1<\epsilon_2$ be given, and suppose that
$q>\psi(1)$
and\break $W^{(q)}(0+)<q^{-1}$ or $q\leq\psi(1)$. Define
\[
V_{\epsilon_1,\epsilon_2}(x,s):=\cases{V_{\epsilon_2}^*(x,s), &\quad
$(x,s)\in
C_{\mathit{II}}^*\cup D_{\mathit{II}}^*$,
\vspace*{1pt}\cr
U_{\epsilon_1,\epsilon_2}(x,s), &\quad $(x,s)
\in C_I^*\cup D_I^*$,
\vspace*{1pt}\cr
e^{s\wedge\epsilon_2}, &\quad
otherwise,}
\]
where $V_{\epsilon_2}^*$ is given in Theorem \ref{rwcmainresult},
\[
U_{\epsilon_1,\epsilon_2}(x,s):=e^sZ^{(q)}(x-\epsilon_1)+e^{\epsilon
_1}W^{(q)}(x-
\epsilon_1)\int_{s-\epsilon_1}^{g_{\epsilon
_2}(A)}e^t
\frac{Z^{(q)}(t)}{W^{(q)}(t)}\,dt
\]
and $A\in(-\infty,\epsilon_2)$ is the unique constant such that
$A-g_{\epsilon_2}(A)=\epsilon_1$. We then have, for $(x,s)\in E$,
\[
\E_{x,s} \bigl[e^{-q(T_{\epsilon_1}\wedge\tau^*_{\epsilon
_2})+\overline X_{T_{\epsilon_1}\wedge\tau^*_{\epsilon_2}}\wedge
\epsilon_2} \bigr]=V_{\epsilon_1,\epsilon_2}(x,s).
\]
\end{lem}

Our main contribution here is the expression for $U_{\epsilon
_1,\epsilon_2}$, thereby allowing us to verify that the strategy
$T_{\epsilon_1}\wedge\tau_{\epsilon_2}^*$ is still optimal. In
fact, this is the content the following result.
%
\begin{theorem}\label{combinationmainresult}
Let $\epsilon_1<\epsilon_2$ be given and suppose that $q>\psi(1)$
and $W^{(q)}(0+)<q^{-1}$ or $q\leq\psi(1)$. Then the solution
to (\ref
{problem3}) is given by $V_{\epsilon_1,\epsilon_2}^*=V_{\epsilon
_1,\epsilon_2}$ with corresponding optimal strategy $\tau_{\epsilon
_1,\epsilon_2}^*=T_{\epsilon_1}\wedge\tau^*_{\epsilon_2}$, where
$\tau^*_{\epsilon_2}$ is given in Theorem \ref{rwcmainresult}.
\end{theorem}
It is also possible to obtain the solution of (\ref{problem3}) with
lower cap only. To this end, define when $q>\psi(1)$ and
$W^{(q)}(0+)<q^{-1}$ the constant function $g_\infty(s):=k^*$ and
$A_{\epsilon_1,\infty}:=\epsilon_1+k^*$.
%
\begin{cor}\label{consistencybarrier}
Let $\epsilon_1\in\R$ and suppose that $\epsilon_2=\infty$, that
is, there is no upper cap.
\begin{longlist}[(b)]
\item[(a)] Assume that $q>\psi(1)$ and that
$W^{(q)}(0+)<q^{-1}$. Then the solution to (\ref{problem3}) is given by
%
\begin{equation}\label{limitp}
V^*_{\epsilon_1,\infty}(x,s)=\cases{V^*(x,s), &\quad $(x,s)\in C_{\mathit{II},\epsilon
_1,\infty}^*\cup
D_{\mathit{II},\epsilon_1,\infty}^*$,
\vspace*{1pt}\cr
U_{\epsilon_1,\infty}(x,s), &\quad $(x,s)\in C_{I,\epsilon_1,\infty
}^*
\cup D_{I,\epsilon_1,\infty}^*$,
\vspace*{1pt}\cr
e^s, &\quad otherwise,}
\end{equation}
where $V^*$ is given in Proposition \ref{solution2} and
\[
U_{\epsilon_1,\infty}(x,s)=e^sZ^{(q)}(x-\epsilon_1)+e^{\epsilon
_1}W^{(q)}(x-
\epsilon_1)\int_{s-\epsilon_1}^{k^*}e^t
\frac
{Z^{(q)}(t)}{W^{(q)}(t)}\,dt.
\]
The corresponding optimal strategy is given by $\tau^*_{\epsilon
_1,\infty}=T_{\epsilon_1}\wedge\tau^*$, where $\tau^*$ is given in
Proposition \ref{solution2}.
\item[(b)] If $q\leq\psi(1)$, then $V^*_{\epsilon
_1,\infty}(x,s)=\infty$ for $(x,s)\in E_{\epsilon_1}$ and
$V^*_{\epsilon_1,\infty}(x,s)=e^s$ otherwise.
\end{longlist}
\end{cor}
%
\begin{rem}
In Theorem \ref{rwcmainresult} there is no lower cap, and hence it
seems natural to obtain Theorem \ref{rwcmainresult} as a corollary to
Theorem \ref{combinationmainresult}. This would be possible if one
merged the proofs of Theorems \ref{rwcmainresult} and \ref
{combinationmainresult} appropriately. However, a merged proof would
still contain the main arguments of both the proof of Theorem \ref
{rwcmainresult} and the proof of Theorem \ref{combinationmainresult}
(note that the proof of Theorem \ref{combinationmainresult} makes use
of Theorem \ref{rwcmainresult}). Therefore, and also for presentation
purposes, we chose to present them separately.
\end{rem}
Finally, if $X_t=(\mu-\frac{1}{2}\sigma^2)t+\sigma W_t$, where $\mu
\in\R,\sigma>0$ and $(W_t)_{t\geq0}$ is a standard Brownian motion,
then Corollary \ref{consistencybarrier} is nothing else than Theorem
3.1 in~\cite{abarrierversion}. However, this is not immediately
clear and requires a simple but lengthy computation which is provided
in Section \ref{applications}.

\section{Guess and verify via principle of smooth or continuous
fit}\label{firsttheorem}
Let us consider the solution to (\ref{problem1}) from an intuitive
point of view. We shall restrict ourselves to the case where $q>\psi
(1)$ and $W^{(q)}(0+)<q^{-1}$. It follows from what was said at the
beginning of Section \ref{ss} that $k^*\in(0,\infty)$.

It is clear that if $(x,s)\in E$ such that $x\geq\epsilon$, then it
is optimal to stop immediately since one cannot obtain a higher payoff
than $\epsilon$, and waiting is penalised by exponential discounting.
If $x$ is much smaller than $\epsilon$, then the cap $\epsilon$
should not have too much influence, and one expects that the optimal
value function $V^*_\epsilon$ and the corresponding optimal strategy
$\tau^*_\epsilon$ look similar to the optimal value function\vadjust{\goodbreak} $V^*$
and optimal strategy $\tau^*$ of problem (\ref{problem2}). On the
other hand, if $x$ is close to the cap, then the process $X$ should be
stopped ``before'' it is a distance $k^*$ away from its running maximum.
This can be explained as follows: the constant $k^*$ in the solution to
problem (\ref{problem2}) quantifies the acceptable ``waiting time'' for
a possibly much higher running supremum at a later point in time. But
if we impose a cap, there is no hope for a much higher supremum and
therefore ``waiting the acceptable time'' for problem (\ref{problem2})
does not pay off in the situation with cap. With exponential
discounting we would therefore expect to exercise earlier. In other
words, we expect an optimal strategy of the form
\[
\tau_{g_\epsilon}=\inf\bigl\{t\geq0\dvtx\overline X_t-X_t
\geq g_\epsilon(\overline X)\bigr\}
\]
for some function $g_\epsilon$ satisfying $\lim_{s\to-\infty
}g_\epsilon(s)=k^*$ and $\lim_{s\to\epsilon}g_\epsilon(s)=0$.

This qualitative guess can be turned into a quantitative guess by an
adaptation of the argument in Section 3 of \cite{maximalityprinciple}
to our setting. To this end, assume that $X$ is of unbounded variation
($W^{(q)}(0+)=0$). We will deal with the bounded variation case later.
From the general theory of optimal stopping (cf. \cite{peskir},
Section 13) we informally expect the value function
\[
V_{g_\epsilon}(x,s)=\E_{x,s} \bigl[e^{-q\tau_{g_\epsilon}+\overline
X_{\tau_{g_\epsilon}}} \bigr]
\]
to satisfy the system
%
\begin{eqnarray}\label{system}
\Gamma V_{g_\epsilon}(x,s)&=&qV_{g_\epsilon}(x,s)\qquad\mbox{for
$s-g_{\epsilon}(s)<x<s$ with $s$ fixed},
\nonumber
\\
\frac{\partial V_{g_\epsilon}}{\partial s}(x,s) \bigg\vert_{x=s-}&=&0
\qquad\mbox{(normal reflection),}
\\
\qquad V_{g_\epsilon}(x,s)\vert_{x=(s-g_\epsilon(s))+}&=&e^s
\qquad\mbox{(instantaneous stopping),}
\nonumber
\end{eqnarray}
where $\Gamma$ is the infinitesimal generator of the process $X$ under
$\P_0$. Moreover, the principle of smooth fit \cite
{peskir,mikhalevich} suggests that this system should be complemented by
%
\begin{equation}\label{sfit}
\frac{\partial V_{g_\epsilon}}{\partial x}(x,s) \bigg\vert_{x=(s-g_\epsilon
(s))+}=0 \qquad\mbox{(smooth fit).}
\end{equation}
Note that, although the smooth fit condition is not necessarily part of
the general theory, it is imposed since by the ``rule of thumb''
outlined in Section 7 in \cite{someremarks} it should hold in this
setting because of path regularity. This belief will be vindicated when
we show that system (\ref{system}) with (\ref{sfit}) leads to the
solution of problem (\ref{problem1}).
Applying the strong Markov property at $\tau^+_s$ and using (\ref
{scale1}) and (\ref{scale2}) shows that
\begin{eqnarray*}
V_{g_\epsilon}(x,s)&=&e^s\E_{x,s} \bigl[e^{-q\tau_{s-g_\epsilon
(s)}^-}1_{\{\tau^-_{s-g_\epsilon(s)}<\tau^+_s\}}
\bigr]
\\
&&{}+\E_{x,s} \bigl[e^{-q\tau^+_s}1_{\{\tau^-_{s-g_\epsilon(s)}>\tau
^+_s\}} \bigr]
\E_{s,s} \bigl[e^{-q\tau_{g_\epsilon}+\overline X_{\tau
_{g_\epsilon}}} \bigr]
\\
&=&e^s \biggl(Z^{(q)}\bigl(x-s+g_\epsilon(s)
\bigr)-W^{(q)}\bigl(x-s+g_\epsilon(s)\bigr)\frac
{Z^{(q)}(g_\epsilon(s))}{W^{(q)}(g_\epsilon(s))}
\biggr)
\\
&&{}+\frac{W^{(q)}(x-s+g_\epsilon(s))}{W^{(q)}(g_\epsilon
(s))}V_{g_\epsilon}(s,s).
\end{eqnarray*}
Furthermore, the smooth fit condition implies
\begin{eqnarray*}
0&=&\lim_{x\downarrow s-g_\epsilon(s)}\frac{\partial V_{g_\epsilon
}}{\partial x}(x,s)
\\
&=&\lim_{x\downarrow s-g_\epsilon(s)}\frac{W^{(q)\prime
}(x-s+g_\epsilon(s))}{W^{(q)}(g_\epsilon(s))} \bigl(V_{g_\epsilon
}(s,s)-e^sZ^{(q)}
\bigl(g_\epsilon(s)\bigr) \bigr).
\end{eqnarray*}
By (\ref{derivativeatorigin}) the first factor tends to a strictly
positive value or infinity which shows that $V_{g_\epsilon
}(s,s)=e^sZ^{(q)}(g_\epsilon(s))$. This would mean that for $(x,s)\in
E$ such that
\mbox{$s-g_\epsilon(s)<x<s$} we have
%
\begin{equation}\label{candidateV}
V_{g_\epsilon}(x,s)=e^sZ^{(q)}\bigl(x-s+g_\epsilon(s)
\bigr).
\end{equation}
Having derived the form of a candidate optimal value function
$V_{g_\epsilon}$, we still need to do the same for $g_\epsilon$.
Using the normal reflection condition in (\ref{system}) shows that our
candidate function $g_\epsilon$ should satisfy the ordinary
differential equation
\[
Z^{(q)}\bigl(g_\epsilon(s)\bigr)+qW^{(q)}
\bigl(g_\epsilon(s)\bigr) \bigl(g^\prime_\epsilon(s)-1
\bigr)=0.
\]

If $X$ is of bounded variation ($W^{(q)}(0+)\in(0,q^{-1})$), we
informally expect from the general theory that $V_{g_\epsilon}$
satisfies the first two equations of (\ref{system}). Additionally, the
principle of continuous fit \cite{someremarks,pesshir} suggests that
the system should be complemented by
\[
V_{g_\epsilon}(x,s)\vert_{x=(s-g_\epsilon(s))+}=e^s \qquad\mbox{(continuous
fit).}
\]
A very similar argument as above produces the same candidate value
function and the same ordinary differential equation for $g_\epsilon$.
\section{Proofs of main results}\label{mainrepf}
\mbox{}
\begin{pf*}{Proof of Lemma \ref{g}}
The idea is to define a suitable bijection $H$ from $(0,k^*)$ to
$(-\infty,\epsilon)$ whose inverse satisfies the differential
equation and the boundary conditions.

First consider the case $q>\psi(1)$ and $W^{(q)}(0+)<q^{-1}$. It
follows from the discussion at the beginning of Section \ref{ss}
that $k^*\in(0,\infty)$ and that the function \mbox{$s\mapsto h(s):=1-\frac
{Z^{(q)}(s)}{qW^{(q)}(s)}$} is negative on $(0,k^*)$. Moreover, $\lim
_{s\downarrow0}h(s)\in[-\infty,0)$ and $\lim_{s\uparrow
k^*}h(s)=0$. These properties\vadjust{\goodbreak} imply that the function
$H\dvtx\break (0,k^*)\rightarrow(-\infty,\epsilon)$ defined by
%
\begin{eqnarray}\label{defofH}
H(s):\!&=&\epsilon+\int_0^s \biggl(1-
\frac{Z^{(q)}(\eta
)}{qW^{(q)}(\eta)} \biggr)^{-1}\,d\eta\nonumber\\[-8pt]\\[-8pt]
&=&\epsilon+\int
_0^s\frac
{qW^{(q)}(\eta)}{qW^{(q)}(\eta)-Z^{(q)}(\eta)}\,d\eta\nonumber
\end{eqnarray}
is strictly decreasing. If we can also show that the integral tends to
$-\infty$ as $s$ approaches $k^*$, we could deduce that $H$ is a
bijection from $(0,k^*)$ to $(-\infty,\epsilon)$. Indeed, appealing
to l'H\^opital's rule and using (\ref{scale3}) we obtain
\begin{eqnarray*}
\lim_{z\uparrow k^*}\frac{qW^{(q)}(z)-Z^{(q)}(z)}{k^*-z}&=&\lim
_{z\uparrow k^*}qW^{(q)}(z)-qW^{(q)\prime}(z)
\\
&=&\lim_{z\uparrow k^*}qe^{\Phi(q)z} \bigl(\bigl(1-\Phi(q)
\bigr)W_{\Phi
(q)}(z)-W_{\Phi(q)}'(z) \bigr)
\\
&=&qe^{\Phi(q)k^*} \bigl(\bigl(1-\Phi(q)\bigr)W_{\Phi(q)}\bigl(k^*
\bigr)-W_{\Phi
(q)}'\bigl(k^*\bigr) \bigr).
\end{eqnarray*}
Denote the term on the right-hand side by $c$, and note that $c<0$ due
to the fact that $W_{\Phi(q)}$ is strictly positive and increasing on
$(0,\infty)$ and since $\Phi(q)>1$ for \mbox{$q>\psi(1)$}. Hence there
exists a $\delta>0$ and $0<z_0<k^*$ such that \mbox{$c-\delta<\frac
{qW^{(q)}(z)-Z^{(q)}(z)}{k^*-z}$} for all $z_0<z<k^*$. Thus
\[
\frac{1}{qW^{(q)}(z)-Z^{(q)}(z)}<\frac{1}{(c-\delta)(k^*-z)}<0
\qquad\mbox{for $z_0<z<k^*$.}
\]
This shows that
\[
\lim_{s\uparrow k^*}H(s)\leq\epsilon+\lim_{s\uparrow k^*}\int
_{z_0}^{s}\frac{qW^{(q)}(\eta)}{(c-\delta)(k^*-\eta)}\,d\eta=-\infty.
\]

The discussion above permits us to define $g_\epsilon:=H^{-1}\in
C^1((-\infty,\epsilon);(0,k^*))$. In particular, differentiating
$g_\epsilon$ gives
\[
g^\prime_\epsilon(s)=\frac{1}{H^\prime(g_\epsilon(s))}=1-\frac
{Z^{(q)}(g_\epsilon(s))}{qW^{(q)}(g_\epsilon(s))}
\]
for $s\in(-\infty,\epsilon)$, and $g_\epsilon$ satisfies $\lim_{s\to
-\infty}g_\epsilon(s)=k^*$ and $\lim_{s\uparrow\epsilon
}g_\epsilon(s)=0$ by construction.

As for the case $q\leq\psi(1)$, note that by (\ref{scale3})
and (\ref{scale4}) we have
%
\begin{equation}\label{estimatefornegativity}
Z^{(q)}(x)-qW^{(q)}(x)\geq Z^{(q)}(x)-
\frac{q}{\Phi
(q)}W^{(q)}(x)>0
\end{equation}
for $x\geq0$ which shows that $k^*=\infty$. Moreover, (\ref
{estimatefornegativity}) together with (\ref{scale4}) implies that
the map $s\mapsto h(s)$ is negative on $(0,\infty)$ and satisfies
$\lim_{s\downarrow0} h(s)\in[-\infty,0)$ and $\lim_{s\uparrow
\infty} h(s)=1-\Phi(q)^{-1}\leq0$. Defining $H\dvtx (0,\infty
)\rightarrow(-\infty,\epsilon)$ as in (\ref{defofH}), one deduces
similarly as above that $H$ is a continuously differentiable bijection
whose inverse satisfies the requirements.

We finish the proof by addressing the question of uniqueness. To this
end, assume that there is another solution $\tilde g$. In particular,
$\tilde g^\prime(s)= h(\tilde g(s))$ for $s\in(s_1,\epsilon)\subset
(-\infty,\epsilon)$ and hence
\[
s_1=\epsilon-\int_{(s_1,\epsilon)}d\eta=\epsilon+\int
_{(s_1,\epsilon)}\frac{\llvert\tilde g^\prime(s)\rrvert}{h(\tilde
g(s))}\, ds=\epsilon+\int
_0^{\tilde g(s_1)}\frac{1}{h(s)}\,ds=H\bigl(\tilde
g(s_1)\bigr),
\]
which implies that $\tilde g=H^{-1}=g_\epsilon$.
\end{pf*}
\begin{pf*}{Proof of Theorem \ref{rwcmainresult}}
Define the function
\[
V_\epsilon(x,s):=e^{s\wedge\epsilon}Z^{(q)}\bigl(x-s+g_\epsilon(s)
\bigr)
\]
for $(x,s)\in E$, and let $\tau_{g_\epsilon}:=\inf\{t\geq
0\dvtx \overline X_t-X_t\geq g_\epsilon(\overline X_t)\}$, where
$g_\epsilon$ is as in Lemma \ref{g}. Because of the infinite horizon
and Markovian claim structure of problem (\ref{problem1}) it is enough
to check the following conditions:
\begin{longlist}[(iii)]
\item[(i)] $V_\epsilon(x,s)\geq e^{s\wedge\epsilon}$ for
all $(x,s)\in E$;
\item[(ii)] $\{e^{-qt}V_\epsilon(X_t,\overline X_t)\dvtx t\geq0\}$ is a
right-continuous $\P_{x,s}$-supermartingale for $(x,s)\in E$;
\item[(iii)] $V_\epsilon(x,s)=\E_{x,s} [e^{-q\tau
_{g_\epsilon}+\overline X_{\tau_{g_\epsilon}}\wedge\epsilon}
]$ for all $(x,s)\in E$.
\end{longlist}
To see why these are sufficient conditions, note that (i)
and (ii) together with Fatou's lemma in the second inequality
and Doob's stopping theorem in the third inequality show that for $\tau
\in\mathcal{M}$,
\begin{eqnarray*}
\E_{x,s} \bigl[e^{-q\tau+\overline X_\tau\wedge\epsilon} \bigr]&\leq&\E_{x,s}
\bigl[e^{-q\tau}V_\epsilon(X_\tau,\overline
X_\tau) \bigr]
\\
&\leq&\liminf_{t\to\infty}\E_{x,s} \bigl[e^{-q(t\wedge\tau
)}V_\epsilon(X_{t\wedge\tau},
\overline X_{t\wedge\tau}) \bigr]
\\
&\leq&V_\epsilon(x,s),
\end{eqnarray*}
which in view of (iii) implies $V^*_\epsilon=V_\epsilon$ and
$\tau^*_\epsilon=\tau_{g_\epsilon}$.

The remainder of this proof is devoted to checking conditions (i)--(iii).
Clearly, condition (i) is satisfied since $Z^{(q)}$ is
bigger or equal to one by definition.\vspace*{9pt}

\textit{Supermartingale property} (ii).\quad
Given the inequality
%
\begin{equation}\label{inequality1}
\E_{x,s} \bigl[e^{-qt}V_\epsilon(X_t,
\overline X_t) \bigr]\leq V_\epsilon(x,s),\qquad (x,s)\in E,
\end{equation}
the supermartingale property is a consequence of the Markov property of
the process $(X,\overline X)$. Indeed, for $u\leq t$ we have
\begin{eqnarray*}
\E_{x,s} \bigl[e^{-qt}V_\epsilon(X_t,
\overline X_t) \vert\mathcal{F}_u \bigr]&=&e^{-qu}
\E_{X_u,\overline X_u} \bigl[e^{-q(t-u)}V_\epsilon(X_{t-u},
\overline X_{t-u}) \bigr]
\\
&\leq& e^{-qu}V_\epsilon(X_u,\overline
X_u).
\end{eqnarray*}

We now prove (\ref{inequality1}), first under the assumption that
$W^{(q)}(0+)=0$, that is, $X$~is of unbounded variation. Let $\Gamma$ be
the infinitesimal generator of $X$ and formally define the function
$\Gamma Z^{(q)}\dvtx \R\setminus\{0\}\rightarrow\R$ by
\begin{eqnarray*}
\Gamma Z^{(q)}(x)&:=&-\gamma Z^{(q)\prime}(x)+\frac{\sigma
^2}{2}Z^{(q)\prime\prime}(x)
\\
&&{}+\int_{(-\infty,0)} \bigl(Z^{(q)}(x+y)-Z^{(q)}(x)-yZ^{(q)\prime
}(x)1_{\{
y\geq-1\}}
\bigr) \Pi(dy).
\end{eqnarray*}
For $x<0$ the quantity $\Gamma Z^{(q)}(x)$ is well defined and $\Gamma
Z^{(q)}(x)=0$. However, for $x>0$ one needs to check whether the
integral part of $\Gamma Z^{(q)}(x)$ is well defined. This is done in
Lemma \ref{generator1} in the \hyperref[app]{Appendix} which shows that this is
indeed the case. Moreover, as shown in Section 3.2 of \cite{pist}, it
holds that
\[
\Gamma Z^{(q)}(x)=qZ^{(q)}(x),\qquad x\in(0,\infty).
\]

Now fix $(x,s)\in E$ and define the semimartingale $Y_t:=X_t-\overline
X_t+g_\epsilon(\overline X_t)$. Applying an appropriate version of the
It\^o--Meyer formula (cf. Theorem 71, Chapter~IV of \cite{protter}) to
$Z^{(q)}(Y_t)$ yields $\P_{x,s}$-a.s.
%
\begin{eqnarray}\label{timeatzero1}
Z^{(q)}(Y_t)&=&Z^{(q)}\bigl(x-s+g_\epsilon(s)
\bigr)+m_t+\int_0^t\Gamma
Z^{(q)}(Y_u)\,du
\nonumber\\[-8pt]\\[-8pt]
&&{}+\int_0^tZ^{(q)\prime}(Y_u)
\bigl(g^\prime_\epsilon(\overline X_u)-1\bigr)\, d
\overline X_u,\nonumber
\end{eqnarray}
where
\begin{eqnarray*}
m_t&=&\int_{0+}^t\sigma
Z^{(q)\prime}(Y_{u-})\,dB_u+\int_{0+}^tZ^{(q)\prime}(Y_{u-})\,dX^{(2)}_u
\\
&&{}+\sum_{0<u\leq t} \bigl(\Delta Z^{(q)}(Y_u)-
\Delta X_uZ^{(q)\prime
}(Y_{u-})1_{\{\Delta X_u\geq-1\}} \bigr)
\\
&&{}-\int_0^t\int_{(-\infty,0)}
\bigl(Z^{(q)}(Y_{u-}+y)-Z^{(q)}(Y_{u-})\\
&&\qquad\hspace*{68.2pt}{}-yZ^{(q)\prime}(Y_{u-})1_{\{y\geq
-1\}}
\bigr) \Pi(dy)\,du
\end{eqnarray*}
and $\Delta X_u=X_u-X_{u-}, \Delta
Z^{(q)}(Y_u)=Z^{(q)}(Y_u)-Z^{(q)}(Y_{u-})$. The fact that $\Gamma
Z^{(q)}$ is not defined at zero is
not a problem as the time $Y$ spends at zero has Lebesgue measure zero
anyway. By the boundedness of $Z^{(q)\prime}$ on $(-\infty,g_\epsilon
(s)]$ the first two stochastic integrals in the expression for $m_t$
are zero-mean martingales, and by the compensation formula
(cf. Corollary 4.6 of \cite{kyprianou}) the third and fourth term
constitute a zero-mean martingale. Next, recall that \mbox{$V_\epsilon
(x,s)=e^{s\wedge\epsilon}Z^{(q)}(x-s+g_\epsilon(s))$} and use
stochastic integration by parts for semimartingales (cf. Corollary 2 of
Theorem 22, Chapter II of~\cite{protter}) to deduce that
%
\begin{eqnarray}\label{final}
&&
e^{-qt}V_\epsilon(X_t,\overline
X_t)\nonumber\\
&&\qquad=V_\epsilon(x,s)+M_t
\nonumber\\[-8pt]\\[-8pt]
&&\qquad\quad{}+\int_0^te^{-qu+\overline X_u\wedge\epsilon}\bigl(\Gamma
Z^{(q)}(Y_u)-qZ^{(q)}(Y_u)\bigr)\,du
\nonumber\\
&&\qquad\quad{}+\int_0^te^{-qu+\overline X_u\wedge\epsilon}
\bigl(Z^{(q)}(Y_u)1_{\{
\overline X_u\leq\epsilon\}}+Z^{(q)\prime}(Y_u)
\bigl(g^\prime_\epsilon(\overline X_u)-1\bigr)
\bigr)\,d\overline X_u,
\nonumber
\end{eqnarray}
where $M_t=\int_{0+}^te^{-qu+\overline X_u\wedge\epsilon}\,dm_u$ is a
zero-mean martingale. The first integral is nonpositive since $\Gamma
Z^{(q)}(y)-qZ^{(q)}(y)\leq0$ for all $y\in\R\setminus\{0\}$. The last integral
vanishes since the process $\overline X_u$ only increments when
$\overline X_u=X_u$ and by definition of $g_\epsilon$. Thus, taking
expectations on both sides yields
\[
\E_{x,s} \bigl[e^{-qt}V_\epsilon(X_t,
\overline X_t) \bigr]\leq V_\epsilon(x,s).
\]

If $W^{(q)}(0+)\in(0,q^{-1})$ (X has bounded variation), then the It\^
o--Meyer formula is nothing more than an appropriate version of the
change of variable formula for Stieltjes integrals and the rest of the
proof follows the same line of reasoning as above. The only change
worth mentioning is that the generator of $X$ takes a different form.
Specifically, one has to work with
\[
\Gamma Z^{(q)}(x)=\mathtt{d}Z^{(q)\prime}(x)+\int
_{(-\infty,0)} \bigl(Z^{(q)}(x+y)-Z^{(q)}(x) \bigr)
\Pi(dy),
\]
which satisfies all the required properties by Lemma \ref{generator1}
in the \hyperref[app]{Appendix} and Section 3.2 in \cite{pist}.

This completes the proof of the supermartingale property.\vspace*{9pt}

\textit{Verification of condition} (iii).\quad
The assertion is clear for $(x,s)\in D^*$. Hence, suppose that
$(x,s)\in C^*$. The assertion now follows from the proof of the
supermartingale property (ii). More precisely, replacing $t$
by $t\wedge\tau_{g_\epsilon}$ in (\ref{final}) and recalling that
$(\Gamma-q)Z^{(q)}(y)=0$ for $y>0$ shows that
\[
\E_{x,s} \bigl[e^{-q(t\wedge\tau_{g_\epsilon})}V_\epsilon(X_{t\wedge
\tau_{g_\epsilon}},
\overline X_{t\wedge\tau_{g_\epsilon
}}) \bigr]=V_\epsilon(x,s).
\]
Using that $\tau_{g_\epsilon}<\infty$ a.s. and dominated
convergence, one obtains the desired equality.
\end{pf*}
\begin{pf*}{Proof of Lemma \ref{valuefunction1}}
For $(x,s)\in D^*_I$ we have $T_{\epsilon_1}=0$ so that
\[
\E_{x,s} \bigl[e^{-q(T_{\epsilon_1}\wedge\tau_{\epsilon
_2}^*)+\overline X_{T_{\epsilon_1}\wedge\tau_{\epsilon_2}^*}\wedge
\epsilon_2} \bigr]=e^s=U_{\epsilon_1,\epsilon_2}(x,s).
\]
As for the case $(x,s)\in C^*_I$, write
\begin{eqnarray*}
\E_{x,s} \bigl[e^{-q(T_{\epsilon_1}\wedge\tau_{\epsilon
_2}^*)+\overline X_{T_{\epsilon_1}\wedge\tau_{\epsilon_2}^*}\wedge
\epsilon_2} \bigr]&=&\E_{x,s}
\bigl[e^{-q(T_{\epsilon_1}\wedge\tau
_{\epsilon_2}^*)+\overline X_{T_{\epsilon_1}\wedge\tau_{\epsilon
_2}^*}}1_{\{T_{\epsilon_1}>\tau^+_A\}} \bigr]
\\
&&{}+\E_{x,s} \bigl[e^{-q(T_{\epsilon_1}\wedge\tau_{\epsilon
_2}^*)+\overline X_{T_{\epsilon_1}\wedge\tau_{\epsilon_2}^*}}1_{\{
T_{\epsilon_1}<\tau^+_A\}} \bigr]
\end{eqnarray*}
and denote the first expectation on the right by $I_1$ and the second
expectation by~$I_2$. An application of the strong Markov property at
$\tau^+_A$ and the definition of $V_{\epsilon_2}^*$ (see Theorem \ref
{rwcmainresult}) give
\begin{eqnarray*}
I_1&=&\E_{x,s} \bigl[e^{-q\tau^+_A}1_{\{T_{\epsilon_1}>\tau^+_A\}
}
\bigr]\E_{A,A} \bigl[e^{-q\tau_{\epsilon_2}^*+\overline X_{\tau
_{\epsilon_2}^*}} \bigr]
\\
&=&\frac{W^{(q)}(x-\epsilon_1)}{W^{(q)}(A-\epsilon
_1)}e^AZ^{(q)}\bigl(g_{\epsilon_2}(A)
\bigr).
\end{eqnarray*}
Recalling that $s<g_{\epsilon_2}(A)$ and using the strong Markov
property at $\tau^+_s$ yields
%
\begin{eqnarray}\label{lastexpectation}
I_2&=&e^s\E_{x,s} \bigl[e^{-qT_{\epsilon_1}}1_{\{T_{\epsilon_1}<\tau
^+_s\}}
\bigr]
\nonumber
\\
&&{}+\E_{x,s} \bigl[e^{-q\tau^+_s}1_{\{T_{\epsilon_1}>\tau^+_s\}} \bigr]
\E_{s,s} \bigl[e^{-qT_{\epsilon_1}+\overline X_{T_{\epsilon_1}}}1_{\{
T_{\epsilon_1}<\tau^+_A\}} \bigr]
\nonumber
\\
&=&e^s \biggl(Z^{(q)}(x-\epsilon_1)-W^{(q)}(x-
\epsilon_1)\frac
{Z^{(q)}(s-\epsilon_1)}{W^{(q)}(s-\epsilon_1)} \biggr)
\nonumber\\[-8pt]\\[-8pt]
&&{}+\frac{W^{(q)}(x-\epsilon_1)}{W^{(q)}(s-\epsilon_1)}\E_{s,s} \bigl
[e^{-qT_{\epsilon_1}+\overline X_{T_{\epsilon_1}}}1_{\{T_{\epsilon
_1}<\tau^+_A\}}
\bigr]
\nonumber
\\
&=&e^s \biggl(Z^{(q)}(x-\epsilon_1)-W^{(q)}(x-
\epsilon_1)\frac
{Z^{(q)}(s-\epsilon_1)}{W^{(q)}(s-\epsilon_1)} \biggr)
\nonumber
\\
&&{}+\frac{W^{(q)}(x-\epsilon_1)}{W^{(q)}(s-\epsilon_1)}e^s\E_{0,0} \bigl
[e^{-q\tau^-_{\epsilon_1-s}+\overline X_{\tau^-_{\epsilon
_1-s}}}1_{\{\tau_{\epsilon_1-s}^-<\tau^+_{A-s}\}}
\bigr].\nonumber
\end{eqnarray}

Next, we compute the expectation on the right-hand side of (\ref
{lastexpectation}) by excursion theory. To be more precise, we are
going to make use of the compensation formula of excursion theory, and
hence we shall spend a moment setting up some necessary notation. In
doing so, we closely follow pages 221--223 in \cite{exitproblems} and
refer the reader to Chapters 6 and 7 in \cite{bertoinbook} for
background reading.
The process $L_t:=\overline X_t$ serves as local time at $0$ for the
Markov process $\overline X-X$ under $\P_{0,0}$. Write \mbox{$L^{-1}:=\{
L^{-1}_t\dvtx t\geq0\}$} for the right-continuous inverse of $L$.
The Poisson point process of excursions indexed by local time shall be
denoted by $\{(t,\varepsilon_t)\dvtx t\geq0\}$, where
\[
\varepsilon_t=\bigl\{\varepsilon
_t(s):=X_{L^{-1}_t}-X_{L^{-1}_{t-}+s}\dvtx 0<s<L^{-1}_t-L^{-1}_{t-}
\bigr\},
\]
whenever $L^{-1}_t-L^{-1}_{t-}>0$. Accordingly, we refer to a generic
excursion as $\varepsilon(\cdot)$ (or just $\varepsilon$ for short
as appropriate) belonging to the space $\mathcal{E}$ of canonical
excursions. The intensity measure of the process $\{(t,\varepsilon
_t)\dvtx t\geq0\}$ is given by $dt\times dn$, where $n$ is a measure on the
space of excursions (the excursion measure). A functional of the
canonical excursion that will be of interest is $\overline\varepsilon
=\sup_{s<\zeta}\varepsilon(s)$, where $\zeta(\varepsilon)=\zeta$
is the length of an excursion. A useful formula for this functional
that we shall make use of is the following (cf.~\cite{kyprianou},
equation (8.18)):
%
\begin{equation}\label{propertyppptail}
n(\overline\varepsilon>x)=\frac{W'(x)}{W(x)}
\end{equation}
provided that $x$ is not a discontinuity point in the derivative of $W$
[which is only a concern when $X$ is of bounded variation, but we have
assumed that in this case $\Pi$ is atomless and hence $W$ is
continuously differentiable on $(0,\infty)]$. Another functional that
we will also use is $\rho_a:=\inf\{s>0\dvtx \varepsilon(s)>a\}$, the
first passage time above $a$ of the canonical excursion $\varepsilon$.
We now proceed with the promised calculation involving excursion
theory. Specifically, an application of the compensation formula in the
second equality and using Fubini's theorem in the third equality gives
\begin{eqnarray*}
&&\E\bigl[e^{-q\tau^-_{\epsilon_1-s}+L_{\tau^-_{\epsilon
_1-s}}}1_{\{\tau_{\epsilon_1-s}^-<\tau^+_{A-s}\}} \bigr]
\\
&&\quad=\E\biggl[\sum_{0<t<\infty}e^{-qL^{-1}_{t-}+t}
1_{ \{\overline\varepsilon_{u}\leq u-\epsilon_1+s\ \forall
u<t,t<A-s \}} 1_{\{\overline\varepsilon_t>t-\epsilon_1+s\}}e^{-q\rho
_{t-\epsilon
_1+s}(\varepsilon_t)} \biggr]
\\
&&\quad=\E\biggl[\int_0^{A-s}dt
\,e^{-qL^{-1}_{t}+t}1_{\{\overline
\varepsilon_u\leq u-\epsilon_1+s\ \forall u<t\}}\int_{\mathcal
{E}}1_{\{\overline\varepsilon>t-\epsilon_1+s\}}e^{-q\rho
_{t-\epsilon_1+s}(\varepsilon)}n(d
\varepsilon) \biggr]
\\
&&\quad=\int_0^{A-s} e^{t-\Phi(q)t}\E
\bigl[e^{-qL^{-1}_t+\Phi(q)t}1_{\{
\overline\varepsilon_u\leq u-\epsilon_1+s\ \forall u<t\}} \bigr]\hat
f(t-\epsilon_1+s)
\,dt,
\end{eqnarray*}
where in the first equality the time index runs over local times and
the sum is the usual shorthand for integration with respect to the
Poisson counting measure of excursions, and $\hat f(u)=\frac
{Z^{(q)}(u)W^{(q)\prime}(u)}{W^{(q)}(u)}-qW^{(q)}(u)$ is an expression
taken from Theorem 1 in \cite
{exitproblems}. Next, note that $L^{-1}_t$ is a stopping time and hence
a change of measure according to (\ref{changeofmeasure}) shows that
the expectation inside the integral can be written as
\[
\P^{\Phi(q)} [\overline\varepsilon_u\leq u-
\epsilon_1+s\mbox{ for all }u<t ].
\]
Using the properties of the Poisson point process of excursions
(indexed by local time) and with the help of (\ref{propertyppptail})
and (\ref{scale5}) we may deduce
\begin{eqnarray*}
\P^{\Phi(q)} [\overline\varepsilon_u\leq u-
\epsilon_1+s\mbox{ for all }u<t ]&=&\exp\biggl(-\int
_0^tn_{\Phi(q)}(\overline\varepsilon>u-
\epsilon_1+s)\,du \biggr)
\\
&=&e^{\Phi(q)t}\frac{W^{(q)}(s-\epsilon_1)}{W^{(q)}(t-\epsilon_1+s)},
\end{eqnarray*}
where $n_{\Phi(q)}$ denotes the excursion measure associated with $X$
under $\P^{\Phi(q)}$. By a change of variables and the fact that
$A-\epsilon_1=g_{\epsilon_2}(A)$ we further obtain
\begin{eqnarray*}
&&\E_{0,0} \bigl[e^{-q\tau^-_{\epsilon_1-s}+L_{\tau^-_{\epsilon
_1-s}}}1_{\{\tau_{\epsilon_1-s}^-<\tau^+_{A-s}\}} \bigr]
\\
&&\qquad=W^{(q)}(s-\epsilon_1)e^{\epsilon_1-s}\int
_{s-\epsilon
_1}^{g_{\epsilon_2}(A)} e^{t}\frac{\hat f(t)}{W^{(q)}(t)}\,dt
\\
&&\qquad=-W^{(q)}(s-\epsilon_1)e^{\epsilon_1-s}\int
_{s-\epsilon
_1}^{g_{\epsilon_2}(A)}e^t \biggl(\frac{Z^{(q)}}{W^{(q)}}
\biggr)^\prime(t)\,dt.
\end{eqnarray*}
Integrating by parts on the right-hand side, plugging the resulting
expression into (\ref{lastexpectation}) and finally adding $I_1$ and
$I_2$ gives the result.
\end{pf*}
\begin{pf*}{Proof of Theorem \ref{combinationmainresult}}
Recall that $T_{\epsilon_1}:=\inf\{t\geq0\dvtx X_t\leq\epsilon
_1\}$ and from Lemma \ref{valuefunction1} that, for $(x,s)\in E$,
%
\begin{equation}\label{cond6}
V_{\epsilon_1,\epsilon_2}(x,s)=\E_{x,s} \bigl[e^{-q(T_{\epsilon
_1}\wedge\tau_{\epsilon_2}^*)+\overline X_{T_{\epsilon_1}\wedge
\tau_{\epsilon_2}^*}\wedge\epsilon_2} \bigr].
\end{equation}
Similarly to the proof of Theorem \ref{rwcmainresult}, it is now
enough to prove that:
\begin{longlist}[(ii)]
\item[(i)] $V_{\epsilon_1,\epsilon_2}(x,s)\geq e^{s\wedge
\epsilon_2}$ for all $(x,s)\in E_{\epsilon_1}$;
\item[(ii)] $\{e^{-q(t\wedge T_{\epsilon_1})}V_{\epsilon
_1,\epsilon_2}(X_{t\wedge T_{\epsilon_1}},\overline X_{t\wedge
T_{\epsilon_1}})\dvtx t\geq0\}$ is a right-continuous
$\P_{x,s}$-su\-per\-mar\-tin\-gale for all $(x,s)\in E_{\epsilon_1}$.
\end{longlist}
Condition (i) is clearly satisfied, so we devote
the remainder of this proof to checking condition (ii).\vspace*{9pt}

\textit{Supermartingale property} (ii).\quad
Let $Y_t:=e^{-qt}V_{\epsilon_1,\epsilon_2}(X_t,\overline X_t)$ for
$t\geq0$. Analogously to the proof of Theorem \ref{rwcmainresult},
it suffices to show that for $(x,s)\in E_{\epsilon_1}$ we have the inequality
%
\begin{equation}\label{inequality2}
\E_{x,s} [Y_{t\wedge T_{\epsilon_1}} ]\leq V_{\epsilon
_1,\epsilon_2}(x,s).
\end{equation}
The latter is clear for $(x,s)\in D^*_I$. If $(x,s)\in
C_{\mathit{II}}^*\cup D^*_{\mathit{II}}$, inequality (\ref{inequality2}) can be
extracted from the proof of Theorem \ref{rwcmainresult} where it is
shown that the process $(e^{-qt}V_{\epsilon_2}^*(X_t,\overline
X_t))_{t\geq0}$ is a $\P_{x,s}$-supermartinagle for all
$(x,s)\in E$. In particular, the process $(Y_t)_{t\geq0}$ is a $\P
_{x,s}$-supermartingale for $(x,s)\in C_{\mathit{II}}^*\cup D^*_{\mathit{II}}$.
The supermartingale property is preserved when stopping at $T_{\epsilon
_1}$ and therefore we obtain, for $(x,s)\in C_{\mathit{II}}^*\cup D^*_{\mathit{II}}$,
%
\begin{equation}\label{partofinequality}
\E_{x,s} [Y_{t\wedge T_{\epsilon_1}} ]\leq V_{\epsilon
_1,\epsilon_2}(x,s).
\end{equation}
Thus, it remains to establish (\ref{inequality2}) for $(x,s)\in
C_{I}^*$. To this end, we first prove that the process $
(Y_{t\wedge T_{\epsilon_1}\wedge\tau_{\epsilon_2}^*} )_{t\geq
0}$ is a $\P_{x,s}$-martingale. The strong Markov property gives
%
\begin{eqnarray}\label{split}
\E_{x,s} [Y_{T_{\epsilon_1}\wedge\tau_{\epsilon_2}^*} \vert\mathcal{F}_t
]&=&Y_{T_{\epsilon_1}\wedge\tau_{\epsilon
_2}^*}1_{\{T_{\epsilon_1}\wedge\tau_{\epsilon_2}^*\leq t\}}
\nonumber\\[-8pt]\\[-8pt]
&&{}+e^{-qt}\E_{X_t,\overline X_t} [Y_{T_{\epsilon_1}\wedge\tau
_{\epsilon_2}^*} ]1_{\{T_{\epsilon_1}\wedge\tau_{\epsilon
_2}^*>t\}}.\nonumber
\end{eqnarray}
By definition of $V_{\epsilon_1,\epsilon_2}$ we see that
\[
Y_{T_{\epsilon_1}\wedge\tau_{\epsilon_2}^*}=\cases{\exp(-qT_{\epsilon
_1}+\overline X_{T_{\epsilon_1}}
), &\quad on $\bigl\{T_{\epsilon_1}\leq\tau_{\epsilon_2}^*\bigr\}$,
\cr
\exp
\bigl(-q\tau_{\epsilon_2}^*+\overline X_{\tau_{\epsilon_2}^*}\bigr),
&\quad
on $\bigl
\{T_{\epsilon_1}>\tau_{\epsilon_2}^*\bigr\}$,}
\]
which shows that the second term on the right-hand side of (\ref
{split}) equals
\begin{eqnarray*}
&&e^{-qt}\E_{X_t,\overline X_t} \bigl[e^{-q(T_{\epsilon_1}\wedge\tau
_{\epsilon_2}^*)+\overline X_{T_{\epsilon_1}\wedge\tau_{\epsilon
_2}^*}}
\bigr](1_{\{t\leq\tau^+_A\}}+1_{\{t>\tau^+_A\}})1_{\{
T_{\epsilon_1}\wedge\tau_{\epsilon_2}^*>t\}}
\\
&&\qquad= \bigl(e^{-qt}U_{\epsilon_1,\epsilon_2}(X_t,\overline
X_t)1_{\{
t\leq\tau^+_A\}}+e^{-qt}V_{\epsilon_2}^*(X_t,
\overline X_t)1_{\{
t>\tau^+_A\}} \bigr)1_{\{T_{\epsilon_1}\wedge\tau_{\epsilon_2}^*>t\}
}
\\
&&\qquad=e^{-qt}V_{\epsilon_1,\epsilon_2}(X_t,\overline
X_t)1_{\{
T_{\epsilon_1}\wedge\tau_{\epsilon_2}^*>t\}}
\\
&&\qquad=Y_t1_{\{T_{\epsilon_1}\wedge\tau_{\epsilon_2}^*>t\}}.
\end{eqnarray*}
Thus, $\E_{x,s} [Y_{T_{\epsilon_1}\wedge\tau_{\epsilon
_2}^*} \vert\mathcal{F}_t ]=Y_{t\wedge T_{\epsilon_1}\wedge
\tau_{\epsilon_2}^*}$ which implies the martingale property of $
(Y_{t\wedge T_{\epsilon_1}\wedge\tau_{\epsilon_2}^*} )_{t\geq
0}$. Again using the strong Markov property we further obtain for
$(x,s)\in C^*_I$,
\begin{eqnarray*}
\E_{x,s} [Y_{t\wedge T_{\epsilon_1}} \vert\mathcal{F}_{\tau
_{\epsilon_2}^*}
]&=&Y_{t\wedge T_{\epsilon_1}}1_{\{t\wedge
T_{\epsilon_1}\leq\tau_{\epsilon_2}^*\}}
\\
&&{}+e^{-q\tau_{\epsilon_2}^*}\E_{X_{\tau_{\epsilon_2}^*},\overline
X_{\tau_{\epsilon_2}^*}} [Y_{(t-u)\wedge T_{\epsilon_1}} ] \vert_{u=\tau
_{\epsilon_2}^*}1_{\{t\wedge T_{\epsilon_1}>\tau
_{\epsilon_2}^*\}}
\\
&\leq&Y_{t\wedge T_{\epsilon_1}}1_{\{t\wedge T_{\epsilon_1}\leq\tau
_{\epsilon_2}^*\}}+e^{-q\tau_{\epsilon_2}^*}V_{\epsilon_1,\epsilon
_2}(X_{\tau_{\epsilon_2}^*},
\overline X_{\tau_{\epsilon_2}^*})1_{\{
t\wedge T_{\epsilon_1}>\tau_{\epsilon_2}^*\}}
\\
&=&Y_{t\wedge T_{\epsilon_1}\wedge\tau_{\epsilon_2}^*},
\end{eqnarray*}
where\vspace*{1pt} the inequality follows from (\ref{partofinequality}) and the
fact that $(X_{\tau_{\epsilon_2}^*}\overline X_{\tau_{\epsilon
_2}^*})\in D^*_{\mathit{II}}$ on \mbox{$\{t\wedge T_{\epsilon_1}>\tau_{\epsilon
_2}^*\}$}. Thus, $\E_{x,s} [Y_{t\wedge T_{\epsilon_1}} ]\leq
U_{\epsilon_1,\epsilon_2}(x,s)=V_{\epsilon_1,\epsilon_2}(x,s)$ for
$(x,s)\in C^*_I$. This completes the proof.
\end{pf*}
\begin{pf*}{Proof of Corollary \ref{consistencybarrier}} Part (a)
follows from the proof of Theorem \ref
{combinationmainresult} by replacing $g_\epsilon$ with $g_\infty
(s)=k^*$ and $A$ by $\epsilon_1+k^*$. For part (b), let
$\epsilon_1\in\R$ be given and recall that due to the assumption
$q\leq\psi(1)$ we have $\lim_{s\downarrow-\infty}g_{\epsilon
_1}(s)=\infty$. For an arbitrary $\delta>\epsilon_1$, the uniqueness
in Lemma \ref{g} implies that
\[
g_\delta(s)=g_{\epsilon_1}(s-\delta+\epsilon_1),\qquad s\in(-
\infty,\delta).
\]
It follows that $\lim_{\delta\uparrow\infty}g_{\delta}(s)=\infty$
for $s\in\R$ and that $\lim_{\delta\uparrow\infty}g_\delta
(A_\delta)=\infty$. Hence, for $(x,s)\in E_{\epsilon_1}$, we have
\[
V^*_{\epsilon_1,\infty}(x,s):=\sup_{\tau\in\mathcal{M}_{\epsilon
_1}}\E_{x,s}
\bigl[e^{-q(T_{\epsilon_1}\wedge\tau)+\overline
X_{T_{\epsilon_1}\wedge\tau}} \bigr]\geq\lim_{\delta\uparrow
\infty}V^*_{\epsilon_1,\delta}(x,s)=
\infty.
\]
On the other hand, if $(x,s)\in E\setminus E_{\epsilon_1}$, then
clearly $V_{\epsilon_1,\infty}^*(x,s)=e^s$. This completes the proof.
\end{pf*}
%
\section{Examples}\label{applications}
The solutions of (\ref{problem1}) and (\ref{problem3}) are given
semi-explicitly in terms of scale functions and a specific solution
$g_\epsilon$ and $g_{\epsilon_2}$, respectively, of the ordinary
differential equation (\ref{diffequ}). The aim of this section is to
look at some examples where the solutions of (\ref{problem1})
and (\ref{problem3}) can be computed more explicitly. For simplicity,
we will assume from now on that every spectrally negative L\'evy
process $X$ considered below is such that $q>\psi(1)$ and
$W^{(q)}(0+)<q^{-1}$. Also assume to begin with that there is an upper cap
$\epsilon$ only.

%
\begin{figure}

\includegraphics{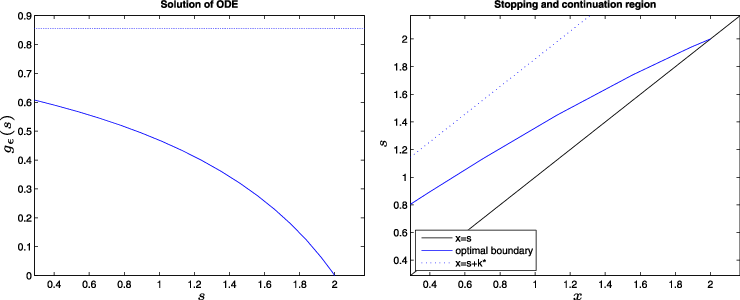}

\caption{An illustration of $s\mapsto g_\epsilon(s)$ and the
corresponding optimal boundary for $q=1.6$, $\epsilon=2$, $\sigma=0$,
$\mu=3$, $a=3$ and $\rho=0.1$.}\label{num1}
\end{figure}

A first step towards more explicit solutions of (\ref {problem1}) is
looking at processes $X$ where explicit expressions for $W^{(q)}$ and
$Z^{(q)}$ are available. In recent years various authors have found
several processes whose scale functions are explicitly known (Example
1.3, Chapter 4 and Section 5.5 in \cite{KuzKypRiv}, e.g.). Here,
however, we would additionally like to find $g_\epsilon$ explicitly. To
the best of our knowledge, we do not know of any examples where this is
possible. One might instead try to solve (\ref{diffequ}) numerically,
but this is not straightforward as there is no initial point to start a
numerical scheme from and, moreover, the possibility of $g_\epsilon$
having infinite gradient at $\epsilon$ might lead to inaccuracies in
the numerical scheme. Therefore, we follow a different route which
avoids these difficulties. Instead of looking at $g_\epsilon$, we
rather focus on its inverse
%
\begin{equation}\label{num}
H(s)=\epsilon+\int_0^s \biggl(1-
\frac{Z^{(q)}(\eta)}{qW^{(q)}(\eta
)} \biggr)^{-1}\,d\eta,\qquad s\in\bigl(0,k^*\bigr),
\end{equation}
where $k^*\in(0,\infty)$ is the unique root of
$Z^{(q)}(z)-qW^{(q)}(z)=0$. It turns out that in some cases (including
the Black--Scholes model) $H$ can be computed explicitly. Since $H$ is
the inverse of $g_\epsilon$, plotting $(H(y),y),y\in(0,k^*)$, yields
visualisations of $s\mapsto g_\epsilon(s)$ for
$s\in(-\infty,\epsilon)$; see Figures \ref{num1}--\ref{num3}.
%
\begin{figure}

\includegraphics{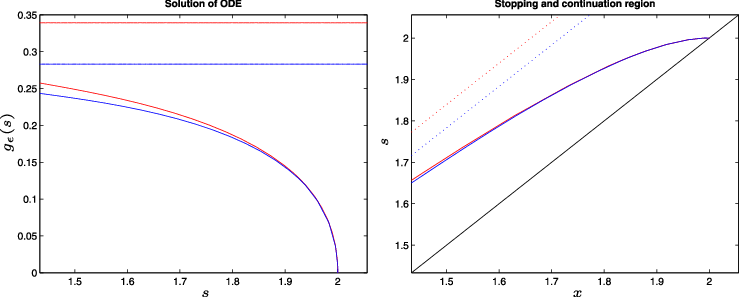}

\caption{Left: a visualization of $s\mapsto g_\epsilon(s)$ for when
$q=4$, $\epsilon=2$, $\sigma=1$ and $\mu=2$ (red) and $q=4$,
$\epsilon=2$, $\sigma=1$, $\mu=2$, $a=3$ and $\rho=0.1$ (blue).
Right: an illustration of the corresponding optimal
boundaries.}\label{num2}\vspace*{-3pt}
\end{figure}
Similarly, plotting $(H(y)-y,H(y)),y\in(0,k^*)$, produces
visualisations of the optimal stopping boundary in the $(x,s)$-plane;
see Figures \ref{num1}--\ref{num3}. Unfortunately, it is often the
%
\begin{figure}[b]

\includegraphics{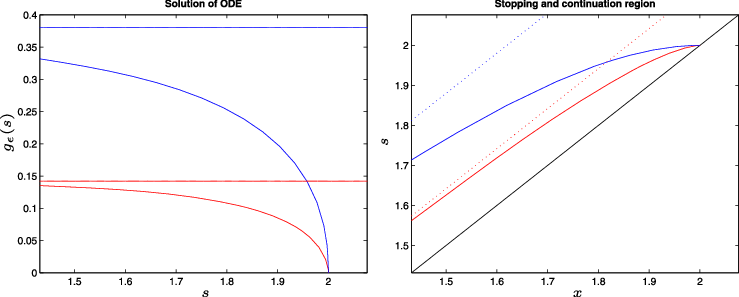}

\caption{Left: a visualisation of $s\mapsto g_\epsilon(s)$ when $q=2$
and $\epsilon=2$, and $X$ is either a linear Brownian motion (blue
curve, $\sigma=\sqrt{2}$, $\mu=0$) or an $\alpha$-stable process
(red curve, $\alpha=1.6$).}\label{num3}
\end{figure}
case that we cannot compute the integral in (\ref{num}) explicitly in
which case one might use numerical integration in Matlab to obtain an
approximation of the integral. The procedure just described is carried
out below for different examples of $X$.

\subsection{Brownian motion with drift and compound Poisson jumps}
Consider the process
\[
X_t=\sigma W_t+\mu t-\sum_{i=1}^{N_t}
\xi_i,\qquad t\geq0,
\]
where $\sigma>0$, $\mu\in\R$, $(W_t)_{t\geq0}$ is a standard
Brownian motion, $(N_t)_{t\geq0}$ is a Poisson process with intensity\vadjust{\goodbreak}
$a>0$ and $\xi_i$ are i.i.d. random variables which are exponentially
distributed with parameter $\rho>0$. The processes $(W_t)_{t\geq0}$
and $ (N_t)_{t\geq0}$ as well as the sequence $(\xi_i)_{i\in\N}$
are assumed to be mutually independent. The Laplace exponent of $X$ is
given by
\[
\psi(\theta)=\frac{\sigma^2}{2}\theta^2+\mu\theta-
\frac{a\theta
}{\rho+\theta},\qquad \theta\geq0.
\]
It is known (cf. Example 1.3 in \cite{KuzKypRiv} and Section 8.2
of \cite{exitproblems}) that
%
\begin{equation}\label{Wscale}
W^{(q)}(x)=\frac{e^{\Phi(q)x}}{\psi^\prime(\Phi(q))}+\frac
{e^{-\zeta_1x}}{\psi^\prime(-\zeta_1)}+\frac{e^{-\zeta_2x}}{\psi
^\prime(-\zeta_2)},\qquad x
\geq0,
\end{equation}
where $-\zeta_2<-\rho<-\zeta_1<0<\Phi(q)$ are the three real
solutions of the equation $\psi(\theta)=q$, and that, for $x\geq0$,
%
\begin{equation}\label{Zscale}
Z^{(q)}(x)=D_1e^{\Phi(q)x}+D_2e^{-\zeta_1x}+D_3e^{-\zeta_2x},
\end{equation}
where $D_1=\frac{q}{\Phi(q)\psi^\prime(\Phi(q))}$, $D_2=\frac
{q}{-\zeta_1\psi^\prime(-\zeta_1)}$ and $D_3=\frac{q}{-\zeta
_2\psi^\prime(-\zeta_2)}$.\vspace*{1pt}

As a first example consider $\sigma=0$. In this case $\psi
(\theta)=q$ reduces to a quadratic equation, and one can calculate explicitly
\begin{eqnarray*}
\zeta_1&=&\frac{1}{2\mu} \bigl(\sqrt{(a+q-\mu
\rho)^2+4\mu q\rho}-(a+q-\mu\rho) \bigr),
\\
\Phi(q)&=&\frac{1}{2\mu} \bigl(\sqrt{(a+q-\mu\rho)^2+4\mu q\rho
}+(a+q-\mu\rho) \bigr).
\end{eqnarray*}
Moreover, it follows that
\[
k^*=\frac{1}{\zeta_1+\phi(q)}\log\biggl(\frac{\Phi(q)\psi^\prime
(\Phi(q))(\zeta_1+1)}{\zeta_1\psi^\prime(-\zeta_1)(1-\Phi
(q))} \biggr).
\]
Using elementary algebra and integration one finds, for $s\in(0,k^*)$,
\begin{eqnarray*}
H(s)&=&\epsilon+\int_0^s \biggl(
\frac{D_1\Phi(q)e^{(\Phi(q)+\zeta
_1)x}}{D_1(\Phi(q)-1)e^{(\Phi(q)+\zeta_1)x}-D_2(\zeta_1+1)} \biggr)\, dx
\\
&&{}-\int_0^s\frac{D_2\zeta_1e^{-(\zeta_1+\Phi(q))x}}{D_1(\Phi
(q)-1)-D_2(\zeta_1+1)e^{-(\zeta_1+\Phi(q))x}}\,dx
\\
&=&\epsilon+\int_0^s \biggl(
\frac{\Phi(q)e^{Ax}}{Be^{Ax}-CD}-\frac
{\zeta_1e^{-Ax}}{C^{-1}B-De^{-Ax}} \biggr)\,dx
\\
&=&\epsilon+\frac{\Phi(q)}{AB}\log\biggl\vert\frac
{Be^{As}-CD}{B-CD} \biggr\vert-
\frac{\zeta_1}{AD}\log\biggl\vert\frac{B-CDe^{-As}}{B-CD} \biggr\vert,
\end{eqnarray*}
where\vspace*{1pt} $A:=\zeta_1+\Phi(q),B:=\Phi(q)-1,C:=\frac{\Phi(q)\psi
^\prime(\Phi(q))}{-\zeta_1\psi^\prime(-\zeta_1)}$ and $D:=\zeta
_1+1$. An example for a certain choice of parameters is given in
Figure \ref{num1}.

Next, assume $\sigma>0$ and $\rho=\infty$; that is, $X$ is a linear
Brownian motion. In particular, this includes the Black--Scholes model.
Again, as explained in Example~1.3 of \cite{KuzKypRiv}, the equation
$\psi(\theta)=q$ reduces to a quadratic equation and $\zeta_1=\delta
-\gamma$ and $\Phi(q)=\delta+\gamma$, where
\[
\gamma:=-\frac{\mu}{\sigma^2} \quad\mbox{and}\quad \delta:=\frac
{1}{\sigma^2}\sqrt{
\mu^2+2q\sigma^2}.
\]
Furthermore, (\ref{Wscale}) and (\ref{Zscale}) may be rewritten on
$x\geq0$ as
%
\begin{eqnarray}\label{below1}
W^{(q)}(x)&=&\frac{2}{\sigma^2\delta}e^{\gamma x}\sinh(\delta x)
\quad\mbox{and}\nonumber\\[-8pt]\\[-8pt]
Z^{(q)}(x)&=&e^{\gamma x}\cosh(\delta x)-\frac{\gamma
}{\delta}e^{\gamma x}
\sinh(\delta x)\nonumber
\end{eqnarray}
and one can compute
%
\begin{equation}\label{below2}
k^*=\frac{1}{\Phi(q)+\zeta_1}\log\biggl(\frac{1+\zeta
_1^{-1}}{1-\Phi(q)^{-1}} \biggr).
\end{equation}
Using elementary algebra in the first and formula 2.447.1 of \cite
{gradshteyn} in the second equality one obtains, for $s\in(0,k^*)$,
\begin{eqnarray*}
H(s)&=&\epsilon+\frac{2q}{\sigma^2\delta}\int_0^{s\delta}
\frac
{\sinh(x)}{(2q/\sigma^2+\gamma)\cosh(x)-\delta\sinh(x)}\,dx
\\
&=&\epsilon+\frac{2q}{\sigma^2\delta(F^2-\delta^2 )} \biggl(F\delta
s-\delta\log\biggl\vert
\frac{\sinh(\tanh
^{-1}(-\delta F^{-1}) )}{\sinh(\delta s+\tanh^{-1}(-\delta
F^{-1}) )} \biggr\vert\biggr),
\end{eqnarray*}
where $F:=2q/\sigma^2+\gamma$. An example for a certain parameter
choice is provided in Figure \ref{num2}.

In the next example we combine the first example with the
second one. More precisely, suppose that $\sigma>$ and $\rho\in
(0,\infty)$, that is, a linear Brownian motion with exponential jumps.
In this case we are unable to compute $k^*$ and $H$ explicitly. We
therefore find $k^*$ numerically and use numerical integration to
obtain an approximation of $k^*$ and $H$, respectively; see Figure \ref{num2}.

\subsection{Stable jumps}
Suppose that $X$ is an $\alpha$-stable process, where $\alpha
\in(1,2]$ with Laplace exponent
$\psi(\theta)=\theta^\alpha,\theta\geq0$. It is known (cf. Example 4.17
of \cite{KuzKypRiv} and Section 8.3 of \cite{exitproblems}) that,
for $x\geq0$,
\[
W^{(q)}(x)=x^{\alpha-1}E_{\alpha,\alpha}\bigl(qx^\alpha
\bigr) \quad\mbox{and}\quad Z^{(q)}(x)=E_{\alpha,1}\bigl(qx^{\alpha}
\bigr),
\]
where $E_{\alpha,\beta}$ is the two-parameter Mittag--Leffler function
which is defined for $\alpha>0,\beta>0$ as
\[
E_{\alpha,\beta}(x)=\sum_{n=0}^\infty
\frac{x^n}{\Gamma(\alpha
n+\beta)}.
\]
Again, using numerical integration and a Matlab function that computes
the Mittag--Leffler function (cf. \cite{mittaglefflerfunction}) one
may approximate $k^*$ and $H$, respectively; see Figure \ref{num3}.
Additionally, we have computed the value function for a choice of
parameters (Figure \ref{num4}).

%
\begin{figure}

\includegraphics{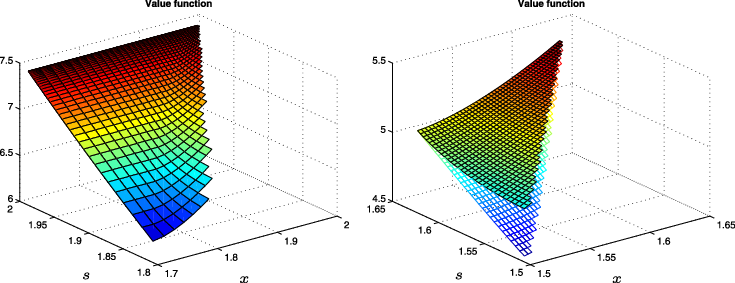}

\caption{Left: a visualisation of $V_\epsilon^*(x,s)$ when $X$ is
$\alpha$-stable with parameter choice $q=3$, $\epsilon=2$ and
$\alpha=1.6$. Right: an illustration of the difference between
$V_{\epsilon_2}^*(x,s)$ (darker surface) and
$V^*_{\epsilon_1,\epsilon_2}(x,s)$ (lighter surface) on
$C^*_{I,\epsilon_1,\epsilon_2}$ for the\vspace*{1pt} same $X$ and same parameters as
on the left. In this case $A\approx1.63$, where $A$ is formally
defined in Section \protect\ref{Mpwualc}.}\label{num4}
\end{figure}

If one considers a lower cap $\epsilon_1$ and an upper cap $\epsilon
_2$, then the only thing that changes for the optimal boundary is that
one has to include an additional vertical line at the value of the
lower cap $\epsilon_1$. However, introducing a lower cap will make a
difference, that is, the value\vspace*{1pt} functions $V_{\epsilon_2}^*(x,s)$ and
$V^*_{\epsilon_1,\epsilon_2}(x,s)$ will be different for $(x,s)\in
C^*_{I,\epsilon_1,\epsilon_2}$;\vspace*{1pt} see Theorems \ref{rwcmainresult}
and \ref{combinationmainresult}. Exploiting the fact that $H$ is the
inverse of $g_{\epsilon_2}$ in a similar way as above, one may also
obtain numerical approximations of the value functions $V^*_{\epsilon
_2}(x,s)$ and $V^*_{\epsilon_1,\epsilon_2}(x,s)$; see Figure \ref{num4}.

\subsection{Maximum process with lower cap only}
Assume the same setting as in the second example above, that is,
$X_t=\sigma W_t+\mu t$. The scale functions and $k^*$ are given
by (\ref{below1}) and (\ref{below2}), respectively. If we suppose that
there is a lower cap $\epsilon_1\in\R$ and no upper cap ($\epsilon
_2=\infty$), then Corollary \ref{consistencybarrier} can be
rewritten more explicitly as follows.
%
\begin{lem}\label{explicitbarrier}
The $V^*$ and $U_{\epsilon_1,\infty}$ part of the optimal value
function $V_{\epsilon_1,\infty}^*$ are given by
\[
V^*(x,s)=\frac{1}{\Phi(q)+\zeta_1} \biggl(\Phi(q) \biggl(\frac
{e^x}{e^{s-k^*}}
\biggr)^{-\zeta_1}+\zeta_1 \biggl(\frac
{e^x}{e^{s-k^*}}
\biggr)^{\Phi(q)} \biggr)
\]
and
\begin{eqnarray*}
U_{\epsilon_1,\infty}(x,s)&=& \biggl(\frac{e^x}{e^{\epsilon_1}} \biggr
)^{-\zeta_1}
\biggl[-\frac{e^{\epsilon_1}}{\beta} \biggl(\int_{\beta
(s-\epsilon_1)}^{\beta k^*}
\frac{e^{u(1+y)}}{e^{u}-1}\,du-e^{k^*\Phi
(q)} \biggr) \biggr]
\\
&&{}+ \biggl(\frac{e^x}{e^{\epsilon_1}} \biggr)^{\Phi(q)} \biggl[\frac
{e^{\epsilon_1}}{\beta}
\biggl(\int_{\beta(s-\epsilon_1)}^{\beta
k^*}\frac{e^{uy}}{e^{u}-1}
\,du-e^{-k^*\zeta_1} \biggr) \biggr],
\end{eqnarray*}
where $\beta=\Phi(q)+\zeta_1=2\delta$ and $y=\beta^{-1}$.
\end{lem}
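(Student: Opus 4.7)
The plan is to verify both formulas by substitution: insert the closed-form Brownian expressions for the scale functions into the representations already given by Proposition~\ref{solution2} and Corollary~\ref{consistency_barrier}, and simplify. A preliminary rewrite using $\cosh(\delta z)=(e^{\delta z}+e^{-\delta z})/2$ and $\sinh(\delta z)=(e^{\delta z}-e^{-\delta z})/2$, together with $\gamma+\delta=\gamma_2$, $\gamma-\delta=\gamma_1$ and $\gamma_2-\gamma_1=2\delta=\beta$, delivers the exponential forms
\begin{equation*}
\W{q}{z}=\frac{e^{\gamma_2 z}-e^{\gamma_1 z}}{\sigma^2\delta},\qquad \Z{q}{z}=\frac{\gamma_2 e^{\gamma_1 z}-\gamma_1 e^{\gamma_2 z}}{\gamma_2-\gamma_1}.
\end{equation*}
For $V^*$, substituting the latter identity into $V^*(x,s)=e^s\Z{q}{k^*-s+x}$ from Proposition~\ref{solution2} and noting $e^{\gamma_i(k^*-s+x)}=(e^x/e^{s-k^*})^{\gamma_i}$ yields the claim after a one-line rearrangement.

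For $U_{\epsilon_1,\infty}$ the starting point is the closed-form expression from Corollary~\ref{consistency_barrier}. The above identities give
\begin{equation*}
\frac{\Z{q}{t}}{\W{q}{t}}=\frac{\sigma^2}{2}\cdot\frac{\gamma_2-\gamma_1 e^{\beta t}}{e^{\beta t}-1},
\end{equation*}
so that the change of variables $u=\beta t$ with $y=1/\beta$ turns the integral of $e^t Z^{(q)}(t)/W^{(q)}(t)$ between $s-\epsilon_1$ and $k^*$ into
\begin{equation*}
\frac{\sigma^2}{2\beta}\bigl(\gamma_2 J_1-\gamma_1 J_2\bigr),
\end{equation*}
where $J_1,J_2$ are precisely the two integrals that appear in the statement. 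Multiplying by $e^{\epsilon_1}\W{q}{x-\epsilon_1}$ and adding $e^s\Z{q}{x-\epsilon_1}$ expresses $U_{\epsilon_1,\infty}(x,s)$ as a linear combination of $(e^x/e^{\epsilon_1})^{\gamma_1}$ and $(e^x/e^{\epsilon_1})^{\gamma_2}$ whose coefficients involve $e^s$, $J_1$ and $J_2$.

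The algebraic lever that brings this combination into the claimed form is the elementary identity
\begin{equation*}
J_2-J_1=\int_{\beta(s-\epsilon_1)}^{\beta k^*}e^{uy}\,du=\beta\bigl(e^{k^*}-e^{s-\epsilon_1}\bigr),
\end{equation*}
which follows at once from $e^{u(1+y)}-e^{uy}=e^{uy}(e^u-1)$ together with $\beta y=1$. Using this to replace $\beta e^{s-\epsilon_1}$ by $\beta e^{k^*}-(J_2-J_1)$ causes the $e^s$-dependence inside the coefficient of each $(e^x/e^{\epsilon_1})^{\gamma_i}$ to cancel against part of the combination $\gamma_2 J_1-\gamma_1 J_2$, leaving only one of $J_1$ or $J_2$ together with a boundary contribution evaluated at $u=\beta k^*$. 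Regrouping produces the asserted closed form. The one genuine obstacle is the sign-and-coefficient bookkeeping in this final cancellation: one must check that the $e^s Z^{(q)}(x-\epsilon_1)$ contribution is absorbed \emph{exactly} by the $\beta e^{s-\epsilon_1}$ freed up by the above identity, so that the final dependence on $s$ enters only through the lower limits $\beta(s-\epsilon_1)$ of $J_1$ and $J_2$; everything else is routine manipulation of exponentials.
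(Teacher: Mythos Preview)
Your approach is correct and essentially the same as the paper's: both start from the scale-function formulas in Proposition~\ref{solution2} and Corollary~\ref{consistency_barrier}, pass to exponentials via $\gamma_1=\gamma-\delta$, $\gamma_2=\gamma+\delta$, perform the change of variables $u=\beta t$, and then exploit the elementary identity that the difference of the two integrals collapses to $\int e^{uy}\,du=\beta(e^{k^*}-e^{s-\epsilon_1})$ in order to absorb the $e^s\Z{q}{x-\epsilon_1}$ term. The only cosmetic difference is that the paper first rewrites $e^t\Z{q}{t}/\W{q}{t}$ as $\tfrac{\delta\sigma^2}{2}e^t\bigl((1-e^{-2\delta t})^{-1}+(e^{2\delta t}-1)^{-1}\bigr)-\tfrac{\gamma\sigma^2}{2}e^t$, obtaining $\tfrac{\sigma^2}{4}(I_1+I_2)$ plus an elementary boundary term, whereas you go straight to $\tfrac{\sigma^2}{2\beta}(\gamma_2 J-\gamma_1 J')$; your route is slightly more direct but the algebraic content is identical.
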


The proof of this result is a lengthy computation provided in
Appendix~\ref{alengthy}. Finally, if we set $\epsilon_1=\epsilon$,
$\mu=r-\sigma^2/2$ for some $r\geq0$ and $q=\lambda+r$ for some
$\lambda>0$ we recover Theorem 3.1 of \cite{abarrierversion}.

\begin{appendix}\label{app}
\section{Complementary results on the infinitesimal generator of $X$}
In this section we provide some results concerning the infinitesimal
generator of $X$ when applied to the scale function $Z^{(q)}$.

First assume that $X$ is of unbounded variation, and define an operator
$(\Gamma,\mathcal{D} (\Gamma))$ as follows. $\mathcal{D}(\Gamma)$
stands for the family of functions $f\in C^2(0,\infty)$ such that the integral
\[
\int_{(-\infty,0)} \bigl(f(x+y)-f(x)-yf^\prime(x)1_{\{y\geq-1\}}
\bigr) \Pi(dy)
\]
is absolutely convergent for all $x>0$. For any $f\in\mathcal
{D}(\Gamma)$, we define the function $\Gamma f\dvtx (0,\infty)\rightarrow
\R$ by
\begin{eqnarray*}
\Gamma f(x)&=&-\gamma f'(x)+\frac{\sigma^2}{2}f''(x)\\
&&{}+
\int_{(-\infty,0)} \bigl(f(x+y)-f(x)-yf'(x)1_{\{y\geq-1\}}
\bigr) \Pi(dy).
\end{eqnarray*}
Similarly, if $X$ is of bounded variation, then $\mathcal{D}(\Gamma)$
stands for the family of \mbox{$f\in C^1(0,\infty)$} such that the integral
\[
\int_{(-\infty,0)} \bigl(f(x+y)-f(x) \bigr) \Pi(dy)
\]
is absolutely convergent for all $x>0$, and for $f\in\mathcal
{D}(\Gamma)$, we define the function $\Gamma f\dvtx (0,\infty)\rightarrow
\R$ by
\[
\Gamma f(x)=\mathtt{d}f'(x)+\int_{(-\infty,0)}
\bigl(f(x+y)-f(x) \bigr)\Pi(dy).
\]
In the sequel it should always be clear from the context in which of
the two cases we are and therefore there should be no ambiguity when
writing $\mathcal{D}(\Gamma)$ and $\Gamma$.
%
\begin{lem}\label{generator1}
We have that $Z^{(q)}\in\mathcal{D}(\Gamma)$ and the function
$x\mapsto\Gamma Z^{(q)}(x)$ is continuous on $(0,\infty)$.
\end{lem}
\begin{pf}
We prove the unbounded and bounded variation case separately.\eject

\textit{Unbounded variation}:
To show\vspace*{1pt} that $Z^{(q)}\in\mathcal{D}(\Gamma)$ it is enough to check
that the integral part of $\Gamma Z^{(q)}$ is absolutely convergent
since $Z^{(q)}\in C^2(0,\infty)$. Fix\vspace*{1pt} $x>0$ and write the integral
part of $\Gamma Z^{(q)}$ as
\begin{eqnarray*}
&&\int_{(-\infty,-\delta)} \bigl\vert Z^{(q)}(x+y)-Z^{(q)}(x)-yZ^{(q)\prime
}(x)1_{\{y\geq-1\}}
\bigr\vert\Pi(dy)
\\
&&\qquad{}+\int_{(-\delta,0)} \bigl\vert Z^{(q)}(x+y)-Z^{(q)}(x)-yZ^{(q)\prime
}(x)1_{\{
y\geq-1\}}
\bigr\vert\Pi(dy),
\end{eqnarray*}
where the value $\delta=\delta(x)\in(0,1)$ is chosen such that
$x-\delta>0$. For $y\in(-\infty,-\delta)$ the monotonicity
of $Z^{(q)}$ implies
%
\begin{equation}\label{estimate1}
\bigl\vert Z^{(q)}(x+y)-Z^{(q)}(x)-yZ^{(q)\prime}(x)1_{\{y\geq-1\}}
\bigr\vert\leq2Z^{(q)}(x)+Z^{(q)\prime}(x)
\end{equation}
and for $y\in(-\delta,0)$, using the mean value theorem, we have
%
\begin{eqnarray}\label{estimate2}
&&\bigl\vert Z^{(q)}(x+y)-Z^{(q)}(x)-yZ^{(q)\prime}(x)\bigr\vert
\nonumber
\\
&&\qquad=q\vert y\vert\bigl\vert W^{(q)}\bigl(\xi(y)\bigr)-W^{(q)}(x)
\bigr\vert\qquad\mbox{where $\xi(y)\in(x+y,x)$}
\nonumber\\[-8pt]\\[-8pt]
&&\qquad=q\vert y\vert\biggl\vert\int_{\xi(y)}^xW^{(q)\prime}(z)
\, dz \biggr\vert
\nonumber
\\
&&\qquad\leq qy^2\sup_{z\in[x-\delta,x]}W^{(q)\prime}(z).\nonumber
\end{eqnarray}
Using these two estimates and defining
$C(\delta)=\int_{(-\delta,0)}y^2\Pi(dy)<\infty$, we see that
\begin{eqnarray*}
&&\int_{(-\infty,0)} \bigl\vert Z^{(q)}(x+y)-Z^{(q)}(x)-yZ^{(q)\prime
}(x)1_{\{
y\geq-1\}}
\bigr\vert\Pi(dy)
\\
&&\qquad\leq\bigl(2Z^{(q)}(x)+Z^{(q)\prime}(x) \bigr)\Pi(-\infty,-\delta
)+qC(\delta)\sup_{z\in[x-\delta,x]}W^{(q)\prime}(z)<\infty.
\end{eqnarray*}

For continuity, let $x>0$ and choose $\delta=\delta(x)\in(0,1)$ such
that $x-2\delta>0$ as well as a sequence $(x_n)_{n\in\N}$ converging
to $x$. Moreover, let $n_0\in\N$ such that for all $n\geq n_0$ we
have $\vert x_n-x\vert<\delta$. In particular, it holds that
$x_n-\delta>0$ for $n\geq n_0$ and hence, using the estimates in (\ref
{estimate1}) and (\ref{estimate2}), we have for all $n\geq n_0$
\begin{eqnarray*}
\hspace*{-3pt}&&\bigl\vert Z^{(q)}(x_n+y)-Z^{(q)}(x_n)-yZ^{(q)\prime}(x_n)1_{\{y\geq-1\}
}
\bigr\vert
\\
\hspace*{-3pt}&&\qquad\leq qy^2\sup_{z\in[x_n-\delta,x_n]}W^{(q)\prime}(z)1_{\{y\geq
-\delta\}
}+
\bigl(2Z^{(q)}(x_n)+Z^{(q)\prime}(x_n)
\bigr)1_{\{y<-\delta\}}
\\
\hspace*{-3pt}&&\qquad\leq qy^2\sup_{z\in[x-2\delta,x+\delta]}W^{(q)\prime}(z)1_{\{
y\geq
-\delta\}}+
\bigl(2Z^{(q)}(x+\delta)+Z^{(q)\prime}(x+\delta)
\bigr)1_{\{
y<-\delta\}}.
\end{eqnarray*}
Since the last term is $\Pi$-integrable, the continuity assertion
follows by dominated convergence and the fact that $Z^{(q)}\in
C^2(0,\infty)$.\eject

\textit{Bounded variation}:
To show that $Z^{(q)}\in\mathcal{D}(\Gamma)$ it is enough to show
that the integral part of $\Gamma Z^{(q)}$ is absolutely convergent
since $Z^{(q)}\in C^1(0,\infty)$. Using the monotonicity and the
definition of $Z^{(q)}$, it is easy to see that for fixed $x>0$,
\begin{eqnarray*}
&&\int_{(-\infty,0)}\bigl\vert Z^{(q)}(x+y)-Z^{(q)}(x)
\bigr\vert\Pi(dy)
\\
&&\qquad\leq2Z^{(q)}(x)\Pi(-\infty,-1)+qW^{(q)}(x)\int
_{(-1,0)}\vert y\vert\Pi(dy)<\infty.
\end{eqnarray*}
The continuity assertion follows in a straightforward manner from
dominated convergence and the fact that $Z^{(q)}\in C^1(0,\infty)$.
\end{pf}
%
%
\section{A lengthy computation}\label{alengthy}
\begin{pf*}{Proof of Lemma \ref{explicitbarrier}}
The first part is a short calculation using the definition of $\gamma
$, $\delta$, $\zeta_1$, $\Phi(q)$ and that $\cosh(z)=\frac
{e^z+e^{-z}}{2}$ and $\sinh(z)=\frac{e^z-e^{-z}}{2}$. As for the
second part, recall that, for $(x,s)\in C^*_I\cup D^*_I$,
\[
U_{\epsilon_1,\infty}(x,s)=e^sZ^{(q)}(x-\epsilon_1)+e^{\epsilon
_1}W^{(q)}(x-
\epsilon_1)\int_{s-\epsilon_1}^{k^*}e^t
\frac
{Z^{(q)}(t)}{W^{(q)}(t)}\,dt.
\]
It is easy to see that
\[
e^t\frac{Z^{(q)}(t)}{W^{(q)}(t)}=e^t\frac{\delta\sigma^2}{2} \biggl(
\frac{1}{1-e^{-2\delta t}}+\frac{1}{e^{2\delta t}-1} \biggr)-e^t\frac
{\gamma\sigma^2}{2},
\]
which, after a change of variables, gives
\begin{eqnarray*}
\int_{s-\epsilon_1}^{k^*}e^t\frac{Z^{(q)}(t)}{W^{(q)}(t)}\,
dt&=&\frac
{\sigma^2}{4} \biggl(\int_{\beta(s-\epsilon_1)}^{\beta k^*}
\frac
{e^{u(1+y)}}{e^u-1}\,du+\int_{\beta(s-\epsilon_1)}^{\beta k^*}
\frac
{e^{uy}}{e^u-1}\,du \biggr)
\\
&&{}+\frac{\gamma\sigma^2}{2}\bigl(e^{s-\epsilon_1}-e^{k^*}\bigr),
\end{eqnarray*}
where $\beta=\Phi(q)+\zeta_1=2\delta$ and $y=\beta^{-1}$. Denote
the first integral on the right-hand side $I_1$ and the second integral
$I_2$. After some algebra one sees that $U_{\epsilon_1,\infty}(x,s)$ equals
%
\begin{eqnarray}\label{forcomp}
&&\frac{e^s}{2}
\bigl(e^{\Phi(q)(x-\epsilon_1)}+e^{-\zeta
_1(x-\epsilon_1)} \bigr)-\frac{e^{\epsilon_1+k^*}\gamma}{\beta}
\bigl(e^{\Phi(q)(x-\epsilon_1)}-e^{-\zeta(x-\epsilon_1)} \bigr)
\nonumber
\\
&&\qquad{}-\frac{e^{\epsilon_1}}{2\beta}e^{-\zeta_1(x-\epsilon
_1)}I_1+\frac{e^{\epsilon_1}}{2\beta}e^{\Phi(q)(x-\epsilon
_1)}I_2
\\
&&\qquad{}+\frac{e^{\epsilon_1}}{2\beta}e^{\Phi(q)(x-\epsilon_1)}I_1-\frac
{e^{\epsilon_1}}{2\beta}e^{-\zeta_1(x-\epsilon_1)}I_2.
\nonumber
\end{eqnarray}
Next, note that the last line in (\ref{forcomp}) can be rewritten as
\begin{eqnarray*}
&&\frac{e^{\epsilon_1}}{2\beta} \bigl(e^{\Phi(q)(x-\epsilon
_1)}+e^{-\zeta_1(x-\epsilon_1)} \bigr)
(I_1-I_2)-\frac{e^{\epsilon
_1}}{2\beta}e^{-\zeta_1(x-\epsilon_1)}I_1+
\frac{e^{\epsilon
_1}}{2\beta}e^{\Phi(q)(x-\epsilon_1)}I_2
\\
&&\qquad=\frac{e^{\epsilon_1}}{2} \bigl(e^{\Phi(q)(x-\epsilon
_1)}+e^{-\zeta_1(x-\epsilon_1)} \bigr)
\bigl(e^{k^*}-e^{s-\epsilon_1}\bigr)
\\
&&\qquad\quad{}-\frac{e^{\epsilon_1}}{2\beta}e^{-\zeta_1(x-\epsilon
_1)}I_1+\frac{e^{\epsilon_1}}{2\beta}e^{\Phi(q)(x-\epsilon_1)}I_2,
\end{eqnarray*}
where the equality follows from evaluating $I_1-I_2$. Plugging this
into (\ref{forcomp}) and simplifying yields
\begin{eqnarray*}
U_{\epsilon_1,\infty}(x,s)&=&-e^{-\zeta_1(x-\epsilon_1)}e^{\epsilon
_1}\beta^{-1}I_1+e^{\Phi(q)(x-\epsilon_1)}e^{\epsilon_1}
\beta^{-1}I_2
\\
&&{}+e^{\epsilon_1+\Phi(q)(x-\epsilon_1)}e^{k^*}\beta^{-1}\zeta
_1+e^{\epsilon_1-\zeta_1(x-\epsilon_1)}e^{k^*}\beta^{-1}\Phi(q).
\end{eqnarray*}
Rearranging the terms completes the proof.
\end{pf*}
\end{appendix}

\section*{Acknowledgements}

I would like to thank A. E. Kyprianou and two anonymous referees for
their valuable comments which led to improvements in this paper.



\printaddresses

\end{document}